\numberwithin{equation}{section}
\numberwithin{equation}{section}
\numberwithin{equation}{section}
\newtheorem{theorem}{Theorem}[section]
\newtheorem{lemma}[theorem]{Lemma}
\numberwithin{equation}{section}
\theoremstyle{remark}
\def\T{ \mathbb{T} }
\newcommand{\q}{{\mathbb{Q}}}
\newcommand{\p}{{\mathbb{P}}}
\newcommand{\R}{{\mathbb{R}}}
\def\div{ \hbox{\rm div}\,  }
\def\u{ \mathbf{u} }
\def\G{ \mathbf{G} }
\def\n{ \mathbf{n} }
\def\b{ \mathbf{B} }
\def\T{ \mathbb{T} }
\newcommand{\Z}{{\mathbb{Z}}}
\def\la{ \Lambda }
\def\nn{\nonumber}
\newcommand{\norm}[2]{\left\lVert #1 \right\rVert_{#2}}
\begin{document}

\title[3D Compressible MHD equations]{Global small solutions to the 3D compressible viscous non-resistive MHD system}
\author[J. Wu]{Jiahong Wu}
\address[J. Wu]{Department of Mathematics, University of Notre
	Dame, Notre Dame, IN 46556, USA } \email{jwu29@nd.edu}

 \author[X. Zhai]{Xiaoping Zhai}
\address[X. Zhai]{School of Mathematics and Statistics, Guangdong University of Technology,
Gua-ngzhou, 510520, China} \email{pingxiaozhai@163.com (Corresponding author)}

\date{}
\subjclass[2020]{35Q35, 35A01, 35A02, 76W05}
\keywords{3D non-resistive compressible MHD; Decay rates; Global solutions}

\begin{abstract}
Whether or not smooth solutions to the 3D compressible magnetohydrodynamic (MHD) equations
without magnetic diffusion are always global in time remains an extremely challenging open problem.
No global well-posedness or stability result is currently available for this 3D MHD system
in the whole space $\mathbb R^3$ or the periodic box $\mathbb T^3$ even
when the initial data is small or near a steady-state solution. This paper presents a global existence
and stability result for smooth solutions to this 3D MHD system near any background magnetic field satisfying
a Diophantine condition.
\end{abstract}

\maketitle

\tableofcontents

\section{ Introduction and main result}

\subsection {Model and synopsis of related studies}

We consider the 3D viscous compressible  magnetohydrodynamic (MHD) system without magnetic diffusion,
\begin{eqnarray}\label{m1}
	\left\{\begin{aligned}
		&\partial_t \rho + {\mathrm{div}} (\rho \mathbf{u}) =0, \quad t>0, \,\, x\in \mathbb \T^3, \\
		&\rho\partial_t \mathbf{u}+ \rho\mathbf{u}\cdot\nabla \mathbf{u}-\mu\Delta \mathbf{u} - (\lambda+\mu) \nabla {\mathrm{div}} \mathbf{u}+\nabla P=(\nabla\times\mathbf{B})\times\mathbf{B},\\
		&\partial_t \mathbf{B}+\mathbf{u}\cdot\nabla\mathbf{B}-\mathbf{B}\cdot\nabla\mathbf{u}
=-\mathbf{B}\div\mathbf{u},\\
		&{\mathrm{div}} \mathbf{B} =0,
	\end{aligned}\right.
\end{eqnarray}
where $\mathbb T^3$ is the 3D periodic box, and $\rho =\rho(t,x)$,  $\u= \u(t,x)$ and  $\mathbf{B}= \b(t,x)$
denote the fluid density, velocity field and the magnetic field, respectively. The parameters $\mu$
and $\lambda$ are shear viscosity and volume viscosity coefficients,  respectively,  which satisfy the standard  strong parabolicity assumption,
\begin{align*}
	\mu>0\quad\hbox{and}\quad
	\nu\stackrel{\mathrm{def}}{=}\lambda+2\mu>0.
\end{align*}
The pressure   $P=P(\rho)$ is a smooth function of the density satisfying $P'>0$ and $P'(\bar\rho)=1$ for some   constant reference density $\bar\rho>0.$ The MHD system concerned here arises in modeling fluids that can be treated as perfect conductors such as strongly collisional plasmas. In addition, the breakdown of ideal MHD is known to be the cause of solar flares, the largest explosions in the solar system \cite{Tand}.

\vskip .1in
This paper focuses on the small data global well-posedness and stability problem. This problem is
extremely challenging due to the lack of magnetic diffusion. In fact, it remains an open problem to construct global  solutions  of  the compressible non-resistive MHD system even with small initial data in $\R^3$ or $\T^3$. This work makes some progress on this difficult problem. It establishes the small
data global well-posedness and stability near background magnetic fields satisfying a Diophantine condition. Our research was inspired by a recent work of W. Chen, Z. Zhang and J. Zhou \cite{zhangzhifei}, which solved a stability problem on the 3D incompressible MHD equations.

\vskip .1in
A vector  ${\n}\in\R^3$ is said to satisfy the Diophantine condition if, for any $\mathbf{k}\in\Z^3\setminus \{0\},$
\begin{equation}\label{diufantu}
	|\n\cdot \mathbf{k}|\ge \frac{c}{|\mathbf{k}|^r} \quad\mbox{for some $c>0$ and $r>2$.}
\end{equation}
There are vectors that do not satisfy the Diophantine condition such as those with all three components
being rational. But almost all vectors in ${\mathbb R}^3$ do satisfy (\ref{diufantu}), as demonstrated
in \cite{zhangzhifei}.

\vskip .1in
We consider the evolution of perturbations near a background magnetic field $\n$
satisfying the Diophantine condition defined above. For the sake of simplicity, we still use  $\b$ to denote the perturbation $\b -{\n}$. Then the perturbed equations can be rewritten as
\begin{eqnarray}\label{m2}
	\left\{\begin{aligned}
		&\partial_t \rho + \div (\rho \u) =0,\\
		&\rho\partial_t \u+ \rho\u\cdot\nabla \u-\mu\Delta \u - (\lambda+\mu) \nabla \div \u+\nabla P={\n}\cdot\nabla \b+\b\cdot\nabla\b-\n\nabla \b-\b\nabla\b,\\
		&\partial_t \b+\u\cdot\nabla\b={\n}\cdot\nabla \u+\b\cdot\nabla\u-\n\div\u-\b\div\u,\\
		&\div \b =0,\\
		&(\rho,\u,\b)|_{t=0}=(\rho_0,\u_0,\b_0),
	\end{aligned}\right.
\end{eqnarray}
where $\n\nabla \b= \sum_{i=1}^3 \n_i \nabla \b_i$ and $\b\nabla \b= \sum_{i=1}^3 \b_i \nabla \b_i$.

\vskip .1in
Before stating our main result, we briefly summarize some related work.
When $\mathbf{B} = \mathbf{0}$,  \eqref{m1} reduces to the  isentropic  compressible Naiver-Stokes equations.  Fundamental issues on the compressible Naiver-Stokes equations such as well-posedness and blowup problems have been extensively
investigated (see, e.g.,  \cite{F04}, \cite{hoff1}, 
\cite{Xin4}, \cite{Xin}, \cite{xin7}).
When the density $\rho$ is a constant, \eqref{m1} becomes the viscous non-resistive incompressible
MHD system, which is the subject of numerous investigations (see, e.g.,  \cite{abidi}, \cite{chemin}, \cite{fefferman1}, \cite{fefferman2},  \cite{lijinlu}, \cite{LXZ}, \cite{LiZh1}, \cite{PZZu1}, \cite{RWZ}, \cite{Wa1}, \cite{wujihong1}, \cite{XZ}, \cite{ZT}). The compressible MHD equations with both
dissipation and magnetic diffusion are also the focus of many studies
(see, e.g., \cite{ChenWang}, \cite{hongguangyi}, \cite{HuWang},  \cite{lihailiang}, \cite{wuguochun}, \cite{Xiao}).
Needless to say, the references listed above is just
a very small portion of the vast literature on this subject. In contrast, not that many results are
currently available for the compressible viscous MHD equations without magnetic diffusion. The small
data global existence and stability for the 2D compressible MHD equation without magnetic diffusion
was studied by 
Wu and Wu \cite{WuWu} and Wu and Zhu \cite{WuZhu}.
Wu and Wu \cite{WuWu} presented a
new and systematic approach on the well-posedness and stability problem on partially dissipated
systems. \cite{WuWu} introduced a diagonalization process to understand the spectrum structure and large-time behavior. The work of Wu and Zhu \cite{WuZhu} is the first to investigate such system on bounded domains and the first to solve this problem by pure energy estimates, which help reduce the complexity in other approaches.  Very recently Dong, Wu and Zhai \cite{zhaixiaoping2021arxiv}
proved the global
existence of strong solutions to the $2\frac12$-D compressible non-resistive MHD equations with small initial data.
Jiang and Jiang \cite{jiangfei2019} showed the  stability/instability criteria for the stratified compressible magnetic
Rayleigh-Taylor problem in Lagrangian coordinates in the three-dimensional case. Besides these stability results, several other existence and uniqueness results are also available.   \cite{JZW} studied the non-resistive limit and the magnetic boundary layer of the 1D compressible MHD equation. \cite{LS1D} obtained
the global existence of weak solutions and their large-time behavior for this 1D MHD system.  Zhong \cite{zhongxin} obtained the local strong solutions without any Cho-Choe-Kim type compatibility conditions in $\R^2.$
 Li and Sun \cite{LS2D} obtained the existence of global weak solutions for the 2D non-resistive compressible MHD equations. \cite{LiZh1} extended this global result to include density-dependent viscosity coefficient and non-monotone pressure law. We emphasize that no global well-posedness or stability result is currently available for this 3D MHD system
 in the whole space $\mathbb R^3$ or the periodic box $\mathbb T^3$ even
 when the initial data is small or near a steady-state solution.

 \subsection{Main result}
Attention is focused on the  system \eqref{m2} for $(t,x)\in[0,\infty)\times\mathbb{T}^3$ with the volume of $\mathbb{T}^3$ normalized to unity,
\begin{align*}
\left|\mathbb{T}^3\right|=1.
\end{align*}
For notational convenience, we write
$$
\bar{\rho} = \int_{\mathbb{T}^3}\rho_0\,dx.
$$
Besides the regularity assumption on the initial data $(\rho_0, \u_0, \b_0)$, we also assume
that
\begin{align*}
 \int_{\mathbb{T}^3}\rho_0 \u_0\,dx=0 \quad\mathrm{and}\quad \int_{\mathbb{T}^3} \b_0\,dx=0.
\end{align*}
It is easy to check that, for sufficiently regular solutions, these averages are conserved in time,
\begin{align}\label{conservation}
	\int_{\mathbb{T}^3}\rho \,dx=\bar{\rho}, \quad \int_{\mathbb{T}^3}\rho \u \,dx=0, \quad\mathrm{and}\quad \int_{\mathbb{T}^3} \b \,dx=0.
\end{align}

The main result of the paper is stated as follows.
\begin{theorem}\label{dingli}
Assume $\n$ satisfies the Diophantine condition (\ref{diufantu}).  Let ${N}\ge 4r+7$ with $r>2$.
Consider the system (\ref{m2}) with the initial data $(\rho_0, \u_0,\b_0)$ satisfying
\begin{align*}
	\rho_0-\bar{\rho}\in H^{N}(\T^3),\quad c_0\le\rho_0\le c_0^{-1},\quad\u_0\in H^{N}(\T^3), \quad\b_0\in H^{N}(\T^3)
\end{align*}
for some constant $c_0>0$. We further assume
\begin{align*}
\int_{\T^3}\rho_0\u_0\,dx=\int_{\T^3}\b_0\,dx=0.
\end{align*}
Then there exists a small constant $\varepsilon$ such that, if
\begin{align*}
\norm{\rho_0-\bar{\rho}}{H^{N}}+\norm{\u_0}{H^{N}}+\norm{\b_0}{H^{N}}\le\varepsilon,
\end{align*}
then the system \eqref{m2} admits a unique  global  solution $(\rho-\bar{\rho}, \u,\b)\in C([0,\infty );H^{N})$. Moreover, for any $t\ge 0$ and $r+4\le\beta  \le N $,
there holds
\begin{align}
\norm{(\rho-\bar{\rho})(t)}{H^{\beta}}+\norm{\u(t)}{H^{\beta}}+\norm{\b(t)}{H^{\beta}}\le C \varepsilon (1+t)^{-\frac{3({N}-\beta)}{2({N}-r-4)}}.\label{dec}
\end{align}
\end{theorem}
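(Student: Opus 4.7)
The plan is a bootstrap/continuation argument anchored on a uniform-in-time a priori bound. The linearization of (\ref{m2}) about $(\bar\rho,\mathbf{0},\mathbf{0})$ is a hyperbolic-parabolic system in $(a,\u,\b)$ with $a=\rho-\bar\rho$: the velocity enjoys Lam\'e dissipation, the density is hyperbolically coupled to $\u$ through $P'(\bar\rho)\nabla a$, and $\b$ is purely transported by $\n\cdot\nabla\u$. No direct dissipation appears in the magnetic equation, and this is the central obstacle. The strategy is to extract artificial dissipation for $\b$ from the coupling term $\n\cdot\nabla\b$ sitting in the momentum equation, with the Diophantine condition paying a fee of $r$ derivatives.

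First I would derive a top-order energy identity. Applying $\partial^\alpha$ for $|\alpha|\le N$ to (\ref{m2}), symmetrizing so that $P'(\bar\rho)\nabla a$ and $\n\cdot\nabla\b$ cancel in the natural $L^2$ pairing, and controlling the remaining commutators via Moser and composition estimates (the threshold $N\ge 4r+7$ leaves ample margin for chains such as $P(\rho)-P(\bar\rho)$) yields
\begin{align*}
\tfrac{d}{dt}\mathcal{E}_N + \mu\|\nabla\u\|_{H^N}^2 + (\lambda+\mu)\|\div\u\|_{H^N}^2 \lesssim \mathcal{E}_N^{1/2}\,\mathcal{D}_N,
\end{align*}
with $\mathcal{E}_N\sim\|a\|_{H^N}^2+\|\u\|_{H^N}^2+\|\b\|_{H^N}^2$. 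A Hoff-type cross estimate, pairing the momentum equation with $\nabla a$, further supplies $\|\nabla a\|_{H^{N-1}}^2$ in the dissipation. To produce dissipation on $\b$ I introduce the interaction functional
\begin{align*}
\mathcal{J}_N(t)=-\sum_{|\alpha|\le N-1}\int_{\T^3}\partial^\alpha\u\cdot\partial^\alpha(\n\cdot\nabla\b)\,dx.
\end{align*}
Computing $\frac{d}{dt}\mathcal{J}_N$ and using the momentum equation to substitute $\partial_t\u$ produces a positive term $\sim\sum\|\n\cdot\nabla\partial^\alpha\b\|_{L^2}^2$, while using $\partial_t\b=\n\cdot\nabla\u+(\text{quadratic})$ produces a negative term $-\sum\|\n\cdot\nabla\partial^\alpha\u\|_{L^2}^2$ that is absorbed by the velocity dissipation. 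Combining a small multiple of $\mathcal{J}_N$ with $\mathcal{E}_N$ and invoking the Diophantine inequality
\begin{align*}
\|f\|_{H^s}\le C\|\n\cdot\nabla f\|_{H^{s+r}}\qquad\text{for mean-zero }f\text{ on }\T^3,
\end{align*}
which is immediate from (\ref{diufantu}) in Fourier space, closes the estimate with effective dissipation $\widetilde{\mathcal{D}}_N\sim\|\nabla\u\|_{H^N}^2+\|\nabla a\|_{H^{N-1}}^2+\|\b\|_{H^{N-r-1}}^2$. Smallness of $\varepsilon$ then yields the global bound on $\mathcal{E}_N$.

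For the decay rates I would replicate the scheme at low regularity $H^{r+4}$, tracking a low-tier energy $\mathcal{E}_\ell$ and dissipation $\mathcal{D}_\ell$. On the torus the conservation laws (\ref{conservation}) kill the zero mode, Poincar\'e upgrades the partial dissipation, and a time-weighted argument combined with a Schonbek-type frequency splitting yields $\mathcal{E}_\ell(t)\lesssim\varepsilon^2(1+t)^{-3}$. Gagliardo-Nirenberg interpolation
\begin{align*}
\|f\|_{H^\beta}\le\|f\|_{H^{r+4}}^{\theta}\|f\|_{H^N}^{1-\theta},\qquad\theta=\frac{N-\beta}{N-r-4},
\end{align*}
applied componentwise to $a,\u,\b$ then delivers (\ref{dec}).

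The hardest part is the Diophantine recovery: each use of the inequality costs $r$ derivatives on $\b$, so I must verify that no nonlinear term in (\ref{m2}) or in $\mathcal{J}_N$ ever demands more than $N$ derivatives on $\b$. The magnetic tension $\b\cdot\nabla\b$, the convective term $\u\cdot\nabla\b$, and the variable coefficient $\rho$ multiplying $\partial_t\u$ (which reappears when $\partial_t\u$ is substituted inside $\mathcal{J}_N$) all need to be estimated by judiciously redistributing derivatives through commutators, and this is precisely where the budget encoded in $N\ge 4r+7$ is consumed.
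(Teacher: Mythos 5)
Your treatment of the magnetic field is essentially the paper's: the interaction functional $\int\partial^\alpha\u\cdot\partial^\alpha(\n\cdot\nabla\b)\,dx$ combined with the Diophantine Poincar\'e inequality is exactly how the hidden dissipation of $\b$ is extracted (the paper runs this only at orders $s\le r+3$, which suffices, and projects onto $\p\u$ so that the pressure terms drop out). The genuine gap is in the density. You propose a Hoff-type cross estimate, pairing the momentum equation with $\nabla a$, to place $\|\nabla a\|_{H^{N-1}}^2$ in the dissipation. But in the perturbed system the gradient part of the momentum equation reads $\partial_t\q\u-\nu\Delta\q\u+\nabla(a+\n\cdot\b)=\q f_4$: the \emph{linear} term $\nabla(\n\cdot\b)$ sits next to $\nabla a$ at exactly the same order. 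Pairing with $\nabla a$ therefore produces the cross term $\int\Lambda^{s}\nabla(\n\cdot\b)\cdot\Lambda^{s}\nabla a\,dx$, which is not quadratically small and can only be absorbed if $\|\b\|_{H^{s+1}}^2$ is already available in the dissipation. At $s=N-1$ this would require $\|\b\|_{H^{N}}^2$, whereas your own interaction functional only delivers $\|\n\cdot\nabla\b\|_{H^{N-1}}^2\gtrsim\|\b\|_{H^{N-1-r}}^2$ --- short by $r+1$ derivatives. So the top-order Hoff estimate does not close; and without time-integrability of $\|a\|_{L^\infty}^2$ and $\|\nabla a\|_{L^\infty}$ the Gr\"onwall closure of the $H^N$ bound fails, since these norms appear in the coefficient $Y_\infty$ of the high-order energy inequality.

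The paper's resolution --- its main new idea for the compressible case, and the ingredient missing from your proposal --- is to damp the \emph{combined quantity} $d=a+\n\cdot\b$ rather than $a$ itself, via the good unknowns $d$ and $\G=\q\u-\frac1\nu\Delta^{-1}\nabla d$. The pair $(d,\q\u)$ carries a damped-wave structure, and the resulting system gives $\partial_t d+\frac1\nu(|\n|^2+1)d+(|\n|^2+1)\div\G=(\mbox{controllable})$ and $\partial_t\G-\nu\Delta\G=(\mbox{controllable})$, i.e.\ genuine damping of $\|d\|_{H^{r+4}}$ with no uncontrolled linear remainder; $a$ is then recovered as $d-\n\cdot\b$ using the separately established bound $\|\b\|_{H^{3}}\le C\|\n\cdot\nabla\b\|_{H^{r+3}}$. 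This step is also what produces the decay: the low-order energy satisfies $\mathcal E\le C\mathcal D^{3/4}$ by the interpolation $\|\cdot\|_{H^{r+4}}\le\|\cdot\|_{H^3}^{3/4}\|\cdot\|_{H^N}^{1/4}$ (this is where $N\ge 4r+7$ enters), giving $\mathcal E(t)\le C(1+t)^{-3}$ and then (\ref{dec}) by interpolation between $H^{r+4}$ and $H^N$; on $\T^3$ there is no Schonbek frequency splitting to appeal to. You would need to add the $d$--$\G$ mechanism for the argument to close.
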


\vskip .1in
As established in  \cite[Section 2]{zhangzhifei}, almost all vector fields
$\n$ in $\R^3$ actually satisfy the Diophantine condition (\ref{diufantu}). Of course, there are
vectors that do not satisfy this condition such as those with all components being rational numbers.

\vskip .1in
\subsection{Difficulties and scheme of the proof}
The local well-posedness of (\ref{m2}) can be shown via a procedure that is now standard for compressible
equations (see, e.g., \cite{Kawashima}). The focus of the proof is on the global bound of
$(\rho-\bar{\rho}, \u,\b)$ in $H^N(\mathbb T^3)$. We use the bootstrapping argument and start by making the
ansatz that
\begin{align*}
	\sup_{t\in[0,T]}\big(\norm{\rho-\bar{\rho}}{H^{N}}+\norm{\u}{H^{N}}+\norm{\b}{H^{N}}\big)\le \delta,
\end{align*}
for suitably chosen $0<\delta<1$. The main efforts are devoted to proving that, if the initial norm is taken to be sufficiently small, namely
$$
\|\rho_0- \bar\rho\|_{H^N}+\|\u_0\|_{H^N}
 +\|\b_0\|_{H^N}\le \varepsilon
$$
with sufficiently small $\varepsilon>0$, then
\begin{align} \label{task}
\sup_{t\in[0,T]}\big(\norm{\rho-\bar{\rho}}{H^{N}}+\norm{\u}{H^{N}}+\norm{\b}{H^{N}}\big)\le \frac{\delta}{2}.
\end{align}
It is not trivial to prove (\ref{task}). The difficulty is due to the lack of dissipation or damping in the equations of $\rho$ and $\b$. Without any stabilizing mechanism, the norms of $\rho$ and $\b$ would
grow in time and it would be impossible to establish (\ref{task}). Therefore, it is crucial to exploit any
potential smoothing and stabilizing effects due to the coupling and interaction of $\rho$, $\u$ and $\b$.
This paper is able to discover the hidden dissipation in two quantities. The first one is the directional
derivative of $\b$ along $\n$, namely $\n\cdot \nabla \b$ while the second one is the combined
quantity $\rho - \bar \rho + \n\cdot \b$. To be more precise, we assume $\bar \rho=1$ and write $a= \rho -1$ and rewrite (\ref{m2}) as
\begin{eqnarray*}
	\left\{\begin{aligned}
		&\partial_t a+ \div\u  =f_1,\\
		&\partial_t \u-\div(\bar{\mu}(\rho)\nabla\u)-\nabla(\bar{\lambda}(\rho)\div\u)+\nabla a + \nabla ({\n}\cdot\b)-{\n}\cdot\nabla \b
		=f_2,\\
		&\partial_t \b-{\n}\cdot\nabla \u+\n\div\u=f_3,\\
		&\div \b =0,
	\end{aligned}\right.
\end{eqnarray*}
where $\bar{\mu}$, $\bar{\lambda}$, $f_1$, $f_2$ and $f_3$ are explicitly given in Subsection \ref{High}.
$f_1$, $f_2$ and $f_3$ are essentially nonlinear terms. We outline the main steps in the proof of
(\ref{task}) and explain how the coupling and interaction leads to the dissipative and stabilizing effects
in $\n\cdot \nabla \b$ and $a+ \n\cdot \b$.

\vskip .1in
The first step estimates the norm $\|(a, \u,\b)\|_{H^\ell}$ with $0\le \ell\le N$ in terms
of the $L^\infty$-norms of $a,\,\u,\, \b$ and their gradients,
$$
	\frac12\frac{d}{dt}\norm{(a,\u,\b)}{H^{\ell}}^2+\mu\norm{\nabla\u}{H^{\ell}}^2
	+(\lambda+\mu)\norm{\div\u}{H^{\ell}}^2\le C
	Y_{\infty}(t)\norm{(a,\u,\b)}{H^{\ell}}^2,
$$
where $Y_{\infty}(t)$, as given in Lemma \ref{ed2}, defined by
\begin{align}
	Y_{\infty}(t)\stackrel{\mathrm{def}}{=}&
	\norm{(\nabla a,\nabla\u,\nabla\b)}{L^\infty}
	+\norm{(\nabla a,\nabla\u,\nabla\b)}{L^\infty}^2 \notag\\
	&+\|(a,\b)\|_{L^{\infty}}^2+\|a\|_{L^{\infty}}^2\|\b\|_{L^{\infty}}^2+\| \b\|_{L^{\infty}}^2\|\nabla \b\|_{L^{\infty}}^2.\label{gg1}
\end{align}
 In order
to obtain a global bound for $\|(a, \u,\b)\|_{H^\ell}$, we need to control the time integral of $Y_\infty(t)$. Due to the lack of dissipation or damping terms in the equations of $a$ and $\b$,
the time integrals of $\|a\|_{L^\infty}$, $\|\b\|_{L^\infty}$, $\|\nabla a\|_{L^\infty}$ and  $\|\nabla \b\|_{L^\infty}$ can not be bounded directly. Our idea is to exploit the hidden dissipation in
the quantities $\n\cdot \nabla \b$ and $a+ \n\cdot \b$.

\vskip .1in
The second main step is to combine the equation of $\mathbb P \u$ (the divergence part of $\u$) and the
equation of $\b$ to establish the stabilizing effect in $\n\cdot \nabla \b$. Here $\mathbb P=\mathbb I - \nabla \Delta^{-1} \mbox{div}$ is the projection onto divergence-free vector fields. We take advantage of the special structure in the system
$$
\begin{cases}
	\partial_t \mathbb P \u -\mu \Delta  \mathbb P \u = \n\cdot \nabla \b + \mathbb P f_4, \\
	\partial_t \b-{\n}\cdot\nabla \u+\n\div\u=f_3,
\end{cases}
$$
and consider the time evolution of the inner product
$$
\frac{d}{dt}\sum_{0\le s\le {r+3}}\int_{\T^3}\la^{s}\p\u\cdot\la^{s}({\n}\cdot\nabla \b)\,dx.
$$
We obtain, as stated in Lemma \ref{gg}, that
\begin{equation}\label{gg2}
	\norm{{\n}\cdot\nabla \b(t)}{H^{r+3}}^2-\frac{d}{dt}\sum_{0\le s\le {r+3}}\int_{\T^3}\la^{s}\p\u\cdot\la^{s}({\n}\cdot\nabla \b)\,dx\le C\norm{ \u(t)}{H^{r+5}}^2,
\end{equation}
which yields the time integrability of $\norm{{\n}\cdot\nabla \b(t)}{H^{r+3}}^2$. Combining with the
Poincar\'e type inequality induced by the Diophantine condition on $\n$,
$$
\|\b\|_{H^3} \le C\, \|\n\cdot \nabla \b\|_{H^{r+3}},
$$
we are then able to control the time integrability of $\|\b\|_{L^\infty}$ and $\|\nabla \b\|_{L^\infty}$.

\vskip .1in
The third main step is to establish the stabilizing property of the combined quantity $a + \n \cdot \b$.
The idea is to make use of the interaction between the equation of $a + \n \cdot \b$ and that of
$\mathbb Q \u$,
\begin{equation}\label{ww}
\begin{cases}
	\partial_t (a+{\n}\cdot\b)={\n}\cdot\nabla \u\cdot{\n}-(|\n|^2+1)\div\u+f_1+f_3\cdot{\n},\\
	\partial_t \q\u-\nu\Delta \q\u+\nabla (a +{\n}\cdot\b)=\q f_4
\end{cases}
\end{equation}
where $\mathbb Q= \nabla\Delta^{-1} \mbox{div}$ projects vectors onto their gradient parts. Noting that
$\mbox{div}  \q\u = \div\u$, we observe that (\ref{ww}) involves a wave structure for $a + \n \cdot \b$
and $\q\u$. To make the wave structure more explicit, we can rewrite (\ref{ww}) as, after ignoring
some non-essential terms,
$$
	\begin{cases}
		\partial_t (a+{\n}\cdot\b)=-(|\n|^2+1)\div\u,\\
		\partial_t \q\u-\nu\Delta \q\u+\nabla (a +{\n}\cdot\b)=0,
	\end{cases}
$$
which can be converted into
$$
\begin{cases}
	\partial_{tt} (a+{\n}\cdot\b)- \nu \Delta \partial_t (a+{\n}\cdot\b) -(|\n|^2+1)\Delta (a+{\n}\cdot\b) =0,  \\
	\partial_{tt} \mbox{div}  \q\u-\nu \Delta \partial_t \mbox{div}  \q\u -(|\n|^2+1)\Delta \mbox{div}  \q\u =0.
\end{cases}
$$
Alternatively we can make the stabilizing effect more explicit by considering the combined quantities
$$
	{d}\stackrel{\mathrm{def}}{=} a + {{\n}\cdot\b}\quad\mbox{and}\quad
	\G\stackrel{\mathrm{def}}{=}\q\u-\frac1\nu\Delta^{-1}\nabla {d},
$$
which satisfy
\begin{eqnarray*}
	\left\{\begin{aligned}
		&\partial_t d +\frac1\nu(|\n|^2+1){d}+ (|\n|^2+1)\div \G  ={\n}\cdot\nabla \u\cdot{\n}+f_1+f_3\cdot{\n},\\
		&\partial_t \G-\nu\Delta \G =\frac1\nu(|\n|^2+1)\q\u
		-\frac1\nu\Delta^{-1}\nabla ({\n}\cdot\nabla \u\cdot{\n})+\q f_4-\frac1\nu\Delta^{-1}\nabla (f_1+f_3\cdot{\n}).
	\end{aligned}\right.
\end{eqnarray*}
These equations clearly reveal the damping in $d=a + {{\n}\cdot\b}$ and the dissipation in $\G$.
We then estimate the Sobolev norms $\|d\|_{H^{r+4}}$ and $\|\G\|_{H^{r+4}}$. $a$ in some of the nonlinear terms in $f_1$ and $f_4$ is replaced by $d -{{\n}\cdot\b}$. Combining with (\ref{gg1}) with $\ell=r+4$ and (\ref{gg2}) in the
previous steps, we then obtain a self-contained inequality.

\vskip .1in
The last step is to establish (\ref{task}) and close the bootstrapping argument. The energy inequality
obtained in the previous step, together with interpolation inequalities, allow us to show that
\begin{equation}\label{gg5}
\|(a, \u, \b)(t)\|_{H^3} \le C\, (1+ t)^{-\frac32}
\end{equation}
when $\delta>0$ is taken to be sufficiently small. In particular, the time integral of $Y_\infty(t)$ in (\ref{gg1}) is time integrable,
$$
\int_0^\infty Y_\infty(t) \,dt \le C <\infty.
$$
Gr\"onwall's inequality then yields
$$
\|(a, \u, \b)(t)\|_{H^N} \le C\, \|(a_0, \u_0,\b_0)\|_{H^N}.
$$
Taking the initial norm to be sufficiently small, we achieve (\ref{task}). The decay rate in (\ref{dec})
is a consequence of (\ref{gg5}) and an interpolation inequality.

\vskip .1in
We point out some differences between the handling of this compressible MHD system and that of the
corresponding incompressible counterpart. The compressible MHD system contains the density, which
involves no damping or dissipation. A crucial component in the proof of Theorem \ref{dingli} is
how to obtain the decay estimate for $\|a(t)\|_{H^3}$. It does not appear to be possible to work
directly with the equation of $a$ to obtain such decay estimate. As described above, our idea is to
exploit the wave structure in the evolution of $a+{\n}\cdot\b$ and $\mbox{div}  \q\u$. In contrast,
the incompressible MHD equations do not contain the density equation. The second main difference is that the velocity $\u$ of the compressible MHD system contains the divergence-free part and the gradient part,
$$
\u =\mathbb P \u + \mathbb Q \u.
$$
It is necessary to consider the equations of $\mathbb P \u$ and $\mathbb Q \u$ separately when we seek the hidden stabilizing effect in the quantities $\n\cdot \nabla \b$ and $a+ \n\cdot\b$. The velocity $\u$
in the incompressible MHD system is already divergence-free and does not have the gradient part. Thus the
compressible MHD system is much more challenging to deal with.

\vskip .1in
\subsection{Organization of the paper}
The rest of this paper is structured as follows. Section \ref{ineq} recalls several
functional inequalities to be used in the proof of Theorem \ref{dingli}. Section \ref{proo}
proves Theorem \ref{dingli}. The long proof is accomplished in six subsections. Subsection \ref{loc}
explains how to prove the local well-posedness and initiates the bootstrapping argument.
Subsection \ref{bas} provides the basic $L^2$-energy estimate. Subsection \ref{High} establishes
the higher-order energy estimate (\ref{gg1}). Subsection \ref{dis} explores the dissipation in $\b$ and proves (\ref{gg2}). Subsection \ref{dis2} discovers the dissipation of the quantity $a+ \n\cdot \b$
and obtains a self-contained inequality for $(a, \u, \b)$ in $H^{r+4}$.  Subsection \ref{compl} proves
the decay estimate (\ref{gg5}) and then (\ref{task}) to close the bootstrapping argument.
The decay rate in (\ref{dec})
is shown via (\ref{gg5}) and an interpolation inequality.

\vskip .3in
\section{Preliminaries}
\label{ineq}

This section provides  several functional inequalities to be used in the proof of our main result.
We first recall a weighted Poincar\'e inequality first established by Desvillettes and Villani in \cite{DV05}.
\begin{lemma}\label{lem-Poi}
	Let $\Omega$  be a bounded connected Lipschitz domain and $\bar{\varrho}$ be a positive constant.  There exists a positive constant $C$, depending on $\Omega$ and $\bar{\varrho}$, such that  for any nonnegative function $\varrho$ satisfying
	\begin{align*}
		\int_{\Omega}\varrho dx=1, \quad \varrho\le\bar{\varrho},
	\end{align*}
	and any $\u\in H^1(\Omega)$, there holds
	\begin{align}\label{wPoi}
		\int_{\Omega}\varrho \left(\u-\int_{\mathbb{T}^3}\varrho \u dx\right)^2\,dx\le C\|\nabla \u\|_{L^2}^2.
	\end{align}
\end{lemma}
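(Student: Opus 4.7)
\medskip
\noindent\textbf{Proof proposal for Lemma \ref{lem-Poi}.}
The plan is to reduce the weighted Poincaré estimate to the classical (unweighted) one, using the pointwise upper bound $\varrho\le\bar\varrho$ to pass between the two norms. The key observation is that both sides of \eqref{wPoi} are invariant under $u\mapsto u+c$ for any constant $c$, since adding a constant leaves $\nabla u$ unchanged and shifts both $u$ and the weighted mean $\tilde u:=\int_\Omega \varrho u\,dx$ by the same amount. So first I would normalize $u$ to have zero unweighted mean, i.e.\ replace $u$ by $u-\bar u$ where $\bar u=|\Omega|^{-1}\int_\Omega u\,dx$.

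Once $\bar u=0$, the classical Poincaré inequality on the bounded connected Lipschitz domain $\Omega$ gives
\[
\int_\Omega u^2\,dx \le C_P\,\|\nabla u\|_{L^2}^2.
\]
Next I would bound the weighted mean using Cauchy--Schwarz together with $\int_\Omega\varrho\,dx=1$ and $\varrho\le\bar\varrho$:
\[
\tilde u^{\,2}=\Bigl(\int_\Omega \varrho\,u\,dx\Bigr)^{2}\le \int_\Omega\varrho\,dx\cdot\int_\Omega \varrho\,u^2\,dx\le \bar\varrho\int_\Omega u^2\,dx\le \bar\varrho\,C_P\,\|\nabla u\|_{L^2}^2.
\]
Finally, the pointwise bound $\varrho\le\bar\varrho$ lets me dominate the weighted $L^2$ expression by its unweighted counterpart, and expanding the square (using $\int_\Omega u\,dx=0$) gives
\[
\int_\Omega \varrho\,(u-\tilde u)^2\,dx\le \bar\varrho\int_\Omega (u-\tilde u)^2\,dx=\bar\varrho\Bigl(\int_\Omega u^2\,dx+|\Omega|\,\tilde u^{\,2}\Bigr).
\]
Combining the two previous displays yields \eqref{wPoi} with $C=\bar\varrho\,C_P(1+\bar\varrho\,|\Omega|)$.

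There is essentially no serious obstacle here because the $L^\infty$ bound on $\varrho$ is a strong hypothesis: it reduces the weighted inequality to the classical one up to constants. The genuinely delicate step, which does not actually need to be overcome under the present hypotheses, is showing that $\tilde u$ is close to $\bar u$; the Cauchy--Schwarz bound above handles this cleanly once $\bar u=0$. (The deeper Desvillettes--Villani statement, which assumes only a lower integral bound $\int_K\varrho\ge\alpha$ on a subdomain rather than a pointwise upper bound, would require a genuine compactness/contradiction argument; the version stated here is the softer $L^\infty$-weighted form, and the short proof sketched above suffices.)
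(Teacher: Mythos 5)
Your proof is correct. Note that the paper does not actually prove this lemma: it simply attributes it to Desvillettes--Villani \cite{DV05} and cites it, so your self-contained elementary argument is a genuine addition rather than a variant of the paper's route. Your reduction works precisely because of the pointwise hypothesis $\varrho\le\bar\varrho$: translation invariance lets you assume $\int_\Omega u\,dx=0$, the classical Poincar\'e inequality on a bounded connected Lipschitz domain controls $\|u\|_{L^2}$, and Cauchy--Schwarz with $\int_\Omega\varrho\,dx=1$ controls the weighted mean $\tilde u$; every step checks out and the constant $\bar\varrho\,C_P(1+\bar\varrho|\Omega|)$ is explicit. One small streamlining: since $\tilde u=\int_\Omega\varrho u\,dx$ minimizes $c\mapsto\int_\Omega\varrho(u-c)^2\,dx$, you can write directly
\begin{equation*}
\int_\Omega\varrho\,(u-\tilde u)^2\,dx\le\int_\Omega\varrho\,(u-\bar u)^2\,dx\le\bar\varrho\int_\Omega(u-\bar u)^2\,dx\le\bar\varrho\,C_P\,\|\nabla u\|_{L^2}^2,
\end{equation*}
which avoids estimating $\tilde u$ altogether and yields the better constant $\bar\varrho\,C_P$. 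Your closing remark is also apt: the genuinely hard Desvillettes--Villani statement replaces the $L^\infty$ upper bound on $\varrho$ by much weaker integral information, and that version cannot be obtained by this comparison argument; but for the lemma as stated, your short proof suffices.
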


\vskip .1in
In order to remove the weight function $\varrho$ in  \eqref{wPoi} without resorting to the lower bound of  $\varrho$, we need  another variant of Poincar\'e inequality (see Lemma 3.2 in \cite{F04}).
\begin{lemma}\label{lem2.2}
	Let $\Omega$ be a bounded connected Lipschitz domain in $\mathbb{R}^3$ and
	$p>1$ be a constant. Given positive constants $M_0$ and $E_0$, there is a constant $C=C(E_0,M_0)$ such that for any
	non-negative function $\varrho$ satisfying
	$$
	M_0\leq\int_{\Omega}\varrho dx\quad\mbox{and}\quad  \int_{\Omega}\varrho^{p}dx\leq
	E_0,
	$$
	and for any $\u\in H^1(\Omega)$, there holds
	$$
	\|\u\|_{L^2}^2\leq C\left[\|\nabla
	\u\|_{L^2}^2+\left(\int_{\Omega}\varrho|\u|dx\right)^2\right].
	$$
\end{lemma}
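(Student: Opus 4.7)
I would prove Lemma \ref{lem2.2} by a standard contradiction/compactness argument, following the template commonly used for such weighted Poincar\'e inequalities (as in Feireisl's monograph). Suppose the conclusion fails. Then for every $n \in \mathbb{N}$ there exist a non-negative function $\varrho_n$ with $M_0 \le \int_\Omega \varrho_n\,dx$ and $\int_\Omega \varrho_n^p\,dx \le E_0$, and a function $u_n \in H^1(\Omega)$, such that
$$
\|u_n\|_{L^2}^2 > n\!\left[\|\nabla u_n\|_{L^2}^2 + \Bigl(\int_\Omega \varrho_n|u_n|\,dx\Bigr)^{\!2}\right].
$$
After normalizing $\|u_n\|_{L^2}=1$, this forces both
$$
\|\nabla u_n\|_{L^2}^2 < \frac{1}{n} \longrightarrow 0, \qquad \int_\Omega \varrho_n|u_n|\,dx < \frac{1}{\sqrt{n}} \longrightarrow 0.
$$

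Next, since $\{u_n\}$ is bounded in $H^1(\Omega)$, the Rellich--Kondrachov theorem yields a subsequence (not relabeled) with $u_n \to u$ strongly in $L^2(\Omega)$ and a.e.\ in $\Omega$. Strong $L^2$ convergence gives $\|u\|_{L^2}=1$, while lower semicontinuity of the Dirichlet energy gives $\nabla u = 0$. Because $\Omega$ is connected, $u \equiv c$ for some constant with $|c| = |\Omega|^{-1/2} > 0$. This is the only place where the geometry of $\Omega$ is used.

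The heart of the proof is to contradict $\int_\Omega \varrho_n|u_n|\,dx \to 0$. The plan is a Chebyshev-plus-H\"older argument that exploits $p>1$ directly. Fix $0 < \delta < |c|/2$ and define the good set
$$
A_n := \{x \in \Omega : |u_n(x) - c| < \delta\}.
$$
On $A_n$ one has $|u_n| \ge |c| - \delta$, and Chebyshev gives $|A_n^c|\,\delta^2 \le \|u_n - c\|_{L^2}^2 \to 0$, so $|A_n^c| \to 0$. Using H\"older with exponents $p$ and $p'=p/(p-1)$,
$$
\int_{A_n^c} \varrho_n\,dx \le \|\varrho_n\|_{L^p}\,|A_n^c|^{1/p'} \le E_0^{1/p}\,|A_n^c|^{1/p'} \longrightarrow 0.
$$
Combining this with $\int_\Omega \varrho_n\,dx \ge M_0$ yields $\int_{A_n} \varrho_n\,dx \ge M_0/2$ for $n$ large, whence
$$
\int_\Omega \varrho_n |u_n|\,dx \ge \int_{A_n} \varrho_n |u_n|\,dx \ge (|c|-\delta)\,\tfrac{M_0}{2} > 0,
$$
a contradiction.

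The only mild obstacle is the low-integrability regime $p$ close to $1$: a naive attempt to pass to the limit via weak convergence of $\varrho_n$ in $L^p$ paired with strong convergence of $|u_n|$ in $L^{p'}$ runs into the Sobolev endpoint when $p' \ge 6$. The Chebyshev/H\"older truncation above bypasses this entirely, since it only requires $p>1$ (so $p'<\infty$) and the strong $L^2$ convergence delivered by Rellich. The constant $C$ produced is not explicit; it depends on $\Omega$, $M_0$, $E_0$, and $p$, as stated.
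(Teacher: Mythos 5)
Your argument is correct. Note that the paper does not actually prove this lemma --- it is quoted directly from Lemma 3.2 in \cite{F04} --- and your compactness--contradiction proof (normalize, extract a Rellich-convergent subsequence, identify the limit as a nonzero constant on the connected domain, then use Chebyshev plus H\"older with exponent $p>1$ to keep the mass of $\varrho_n$ on the set where $|u_n|$ is bounded below) is precisely the standard route by which such generalized Poincar\'e inequalities are established in that reference, so there is nothing to add beyond the minor remark that the constant also depends on $\Omega$ and $p$, as you already observe.
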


\vskip .1in
The next lemma states a special Poincar\'e inequality involving a vector satisfying the Diophantine condition.

\begin{lemma}\label{diu}
	Assume  ${\n}\in\R^3$ satisfies the Diophantine condition \eqref{diufantu}. Let $s\in \mathbb R$. Then, for any function $f$ satisfies  $\nabla f\in H^{s+r}(\T^3)$ and $\int_{\T^3}f\,dx=0$,
	\begin{equation*}
		\|f\|_{H^{s}(\T^3)}\le C\|{\n}\cdot\nabla f\|_{H^{s+r}(\T^3)}.
	\end{equation*}
\end{lemma}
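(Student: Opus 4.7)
The natural plan is to pass to Fourier series on $\T^3$. Expand $f=\sum_{\mathbf{k}\in\Z^3} \hat f(\mathbf{k})\, e^{2\pi i \mathbf{k}\cdot x}$; since $\int_{\T^3} f\,dx=0$, the zero mode vanishes, so $\hat f(0)=0$ and all sums below may be restricted to $\mathbf{k}\in\Z^3\setminus\{0\}$. Differentiating term by term gives
\[
\n\cdot\nabla f=\sum_{\mathbf{k}\ne 0} 2\pi i\,(\n\cdot\mathbf{k})\,\hat f(\mathbf{k})\, e^{2\pi i \mathbf{k}\cdot x},
\]
and the Sobolev norms are the usual weighted $\ell^2$ sums $\|f\|_{H^s}^2 \simeq \sum_{\mathbf{k}\ne 0}(1+|\mathbf{k}|^2)^s |\hat f(\mathbf{k})|^2$ and $\|\n\cdot\nabla f\|_{H^{s+r}}^2 \simeq \sum_{\mathbf{k}\ne 0}(1+|\mathbf{k}|^2)^{s+r}|\n\cdot\mathbf{k}|^2|\hat f(\mathbf{k})|^2$ (absorbing the $(2\pi)^2$).

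The second step is to apply the Diophantine hypothesis \eqref{diufantu} mode by mode: for every $\mathbf{k}\in\Z^3\setminus\{0\}$, $|\n\cdot\mathbf{k}|^2\ge c^2 |\mathbf{k}|^{-2r}$. Since $\mathbf{k}\in\Z^3\setminus\{0\}$ forces $|\mathbf{k}|\ge 1$, the weights $|\mathbf{k}|^2$ and $1+|\mathbf{k}|^2$ are comparable (with $|\mathbf{k}|^2\le 1+|\mathbf{k}|^2\le 2|\mathbf{k}|^2$), hence $(1+|\mathbf{k}|^2)^\alpha \simeq |\mathbf{k}|^{2\alpha}$ uniformly in $\mathbf{k}\ne 0$ for any real $\alpha$. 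Combining,
\[
(1+|\mathbf{k}|^2)^{s+r}\,|\n\cdot\mathbf{k}|^2 \;\ge\; c'\, |\mathbf{k}|^{2(s+r)}\cdot |\mathbf{k}|^{-2r} \;=\; c'\,|\mathbf{k}|^{2s}\;\ge\; c''\,(1+|\mathbf{k}|^2)^s,
\]
for constants $c',c''>0$ depending only on $c$, $r$, and $s$.

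Summing in $\mathbf{k}$ and using Parseval yields
\[
\|\n\cdot\nabla f\|_{H^{s+r}}^2 \;\ge\; c''\sum_{\mathbf{k}\ne 0}(1+|\mathbf{k}|^2)^s |\hat f(\mathbf{k})|^2 \;=\; c''\,\|f\|_{H^s}^2,
\]
which is the desired inequality. I do not expect any genuine obstacle: the only subtlety is verifying the comparability $(1+|\mathbf{k}|^2)^\alpha\simeq |\mathbf{k}|^{2\alpha}$ uniformly in nonzero integer lattice points (handled by the discreteness $|\mathbf{k}|\ge 1$), and confirming that the zero-mean hypothesis is exactly what is needed to discard $\mathbf{k}=0$, where the Diophantine lower bound is unavailable. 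The absolute convergence of the left-hand sum is automatic from the assumption $\nabla f\in H^{s+r}(\T^3)$, so the formal manipulation above is justified.
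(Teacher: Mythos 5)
Your proof is correct and is essentially the argument the paper intends: the paper only remarks that the lemma "follows from the standard Poincar\'e inequality and the Diophantine condition," and your Fourier-mode computation (discarding the zero mode via the mean-zero hypothesis, applying $|\n\cdot\mathbf{k}|\ge c|\mathbf{k}|^{-r}$ mode by mode, and using $|\mathbf{k}|\ge 1$ to compare the weights) is the standard realization of exactly that sketch.
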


The proof of this lemma follows from the standard Poincar\'e inequality and the Diophantine condition.
Finally we recall several calculus inequalities.

\begin{lemma}\label{daishu}{\rm(\cite{kato})}
Let $s\ge 0$. Then there exists a constant $C$ such that, for any $f,g\in {H^{s}}(\T^3)\cap {L^\infty}(\T^3)$, we have
\begin{equation*}
\|fg\|_{H^{s}}\le C(\|f\|_{L^\infty}\|g\|_{H^{s}}+\|g\|_{L^\infty}\|f\|_{H^{s}}).
\end{equation*}
\end{lemma}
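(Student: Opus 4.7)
The statement is the classical Moser--Kato--Ponce product estimate in Sobolev spaces on the torus. My plan is to prove it via Bony's paraproduct decomposition combined with the Littlewood--Paley characterization $\|h\|_{H^s}^2\sim |\widehat h(0)|^2+\sum_{j\geq 0} 2^{2js}\|\Delta_j h\|_{L^2}^2$. Fix a dyadic partition $\{\Delta_j\}_{j\geq-1}$ and low-frequency cutoffs $S_j=\sum_{k<j}\Delta_k$, and decompose
$$
 fg = T_f g + T_g f + R(f,g),\quad T_f g=\sum_j S_{j-1}f\,\Delta_j g,\quad R(f,g)=\sum_j\Delta_j f\,\widetilde\Delta_j g,
$$
where $\widetilde\Delta_j=\Delta_{j-1}+\Delta_j+\Delta_{j+1}$. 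The two paraproducts and the remainder are then estimated separately; the case $s=0$ is just H\"older, so I may assume $s>0$.

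For $T_f g$, the spectral localization of $S_{j-1}f\,\Delta_j g$ shows that only blocks with $|k-j|\leq N_0$ contribute to $\Delta_k T_f g$, so
$$
 \|\Delta_k T_f g\|_{L^2}\leq C\|f\|_{L^\infty}\sum_{|j-k|\leq N_0}\|\Delta_j g\|_{L^2}.
$$
Multiplying by $2^{ks}$ and taking $\ell^2$ in $k$ yields $\|T_f g\|_{H^s}\leq C\|f\|_{L^\infty}\|g\|_{H^s}$, and the estimate $\|T_g f\|_{H^s}\leq C\|g\|_{L^\infty}\|f\|_{H^s}$ follows by symmetry. These two steps are routine and essentially reduce to inserting the trivial bound $\|S_{j-1}f\|_{L^\infty}\leq C\|f\|_{L^\infty}$.

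The remainder $R(f,g)$ is the only subtle piece, and this is where I expect the main obstacle. Each block $\Delta_j f\,\widetilde\Delta_j g$ has Fourier support in a ball of radius $O(2^j)$, so it can contribute to every shell $\Delta_k$ with $k\leq j+N_0$; the convenient localization available for the paraproducts is lost. Placing one factor in $L^\infty$ gives $\|\Delta_k R(f,g)\|_{L^2}\leq C\|g\|_{L^\infty}\sum_{j\geq k-N_0}\|\Delta_j f\|_{L^2}$, and after multiplying by $2^{ks}$ the bound takes convolution form
$$
 2^{ks}\|\Delta_k R(f,g)\|_{L^2}\leq C\|g\|_{L^\infty}\sum_{j\geq k-N_0}2^{(k-j)s}\bigl(2^{js}\|\Delta_j f\|_{L^2}\bigr).
$$
Because $s>0$, the weight sequence $w_l=2^{ls}$ restricted to $l\leq N_0$ lies in $\ell^1(\Z)$, so Young's inequality in $j$ returns $\|R(f,g)\|_{H^s}\leq C\|g\|_{L^\infty}\|f\|_{H^s}$. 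Symmetrizing in $f,g$ and combining with the trivial zero-mode bound $|\widehat{fg}(0)|\leq C\|f\|_{L^\infty}\|g\|_{L^2}$ then yields the claimed inequality.
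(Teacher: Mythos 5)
Your argument is correct. Note, however, that the paper does not prove this lemma at all: it is quoted as a known calculus inequality with a citation to Kato's lecture notes, so there is no in-paper proof to compare against line by line. The classical route behind that citation treats integer $s$ by expanding $\Lambda^s(fg)$ (or $D^\alpha(fg)$) with the Leibniz rule and estimating each term $D^\beta f\,D^{\alpha-\beta}g$ in $L^2$ via H\"older with exponents $2s/|\beta|$ and $2s/(s-|\beta|)$ together with the Gagliardo--Nirenberg interpolation $\|D^\beta f\|_{L^{2s/|\beta|}}\le C\|f\|_{L^\infty}^{1-|\beta|/s}\|f\|_{\dot H^{s}}^{|\beta|/s}$. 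Your Bony-decomposition proof is a genuinely different and in fact better-suited route here: it handles non-integer $s$ directly (which matters, since the paper applies the lemma with $s=r+3$, $r>2$ not necessarily an integer, where the Leibniz-rule argument needs an extra paradifferential step anyway), and it isolates cleanly the one place where $s>0$ is used, namely the summability of the geometric weight $2^{ls}$, $l\le N_0$, in the remainder estimate. All the individual steps you give are sound: the uniform bounds $\|S_{j-1}f\|_{L^\infty}\le C\|f\|_{L^\infty}$ and $\|\widetilde\Delta_j g\|_{L^\infty}\le C\|g\|_{L^\infty}$, the frequency localization of the paraproduct and remainder blocks, the Young convolution inequality $\ell^1*\ell^2\to\ell^2$, the separate treatment of $s=0$ by H\"older, and the zero-mode bound on the torus. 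What the classical interpolation proof buys in exchange is that it avoids Littlewood--Paley machinery entirely for integer $s$; what yours buys is uniformity in $s\ge 0$ and a template that extends immediately to the commutator estimate of Lemma \ref{jiaohuanzi}, which is proved by the same decomposition.
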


\begin{lemma}\label{jiaohuanzi}{\rm(\cite{kato})}
Let $s> 0$. Then there exists a constant $C$ such that, for any $f\in {H^{s}}(\T^3)\cap W^{1,\infty}(\T^3)$, $g\in {H^{s-1}}(\T^3)\cap {L^\infty}(\T^3)$, there holds
\begin{align*}
\norm{[\la^s,f\cdot\nabla ]g}{L^2}\le C(\norm{\nabla f}{L^\infty}\norm{\la^sg}{L^2}+\norm{\la^s f}{L^2}\norm{\nabla g}{L^\infty}).
\end{align*}
\end{lemma}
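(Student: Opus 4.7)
The plan is to prove this Kato--Ponce-type commutator estimate via Bony's paraproduct decomposition combined with Bernstein's inequality. Since $f\cdot\nabla$ is a first-order operator with variable coefficient, the commutator $[\Lambda^s, f\cdot\nabla]g$ should be of order $s$ rather than $s+1$, and the paraproduct machinery makes this cancellation explicit.

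First I would decompose, using Bony's formula,
\begin{equation*}
f\cdot\nabla g \;=\; T_f(\nabla g) \,+\, T_{\nabla g}f \,+\, R(f,\nabla g),
\end{equation*}
where $T$ is the paraproduct and $R$ the remainder. Applying $\Lambda^s$ on the left and splitting $f\cdot\nabla \Lambda^s g$ in the same way, I would rewrite
\begin{equation*}
[\Lambda^s, f\cdot\nabla]g \;=\; \underbrace{[\Lambda^s, T_f]\nabla g}_{\mathrm{I}} \;+\; \underbrace{\bigl(\Lambda^s T_{\nabla g} f - T_{\Lambda^s \nabla g} f\bigr)}_{\mathrm{II}} \;+\; \underbrace{\bigl(\Lambda^s R(f,\nabla g) - R(f,\Lambda^s \nabla g)\bigr)}_{\mathrm{III}}.
\end{equation*}
The idea is that piece I is a genuine paraproduct commutator that gains one derivative (the symbol difference $|\xi+\eta|^s-|\xi|^s\sim s|\xi|^{s-1}\eta$ when $|\eta|\ll|\xi|$), so Taylor expansion in frequency on each dyadic block $\ddj$ gives the pointwise bound $\|\ddj [\Lambda^s,T_f]\nabla g\|_{L^2}\lesssim 2^{js}c_j\|\nabla f\|_{L^\infty}\|\Lambda^s g\|_{L^2}$ with $\{c_j\}\in\ell^2$, handling the first term on the right-hand side.

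For piece II, the low-high structure of $T_{\nabla g}$ means the high-frequency factor is $f$, so that $\Lambda^s$ essentially falls on $f$: block-by-block estimates combined with $\|\dot S_{j-1}\nabla g\|_{L^\infty}\lesssim\|\nabla g\|_{L^\infty}$ (where $\dot S_{j-1}$ is the low-frequency cutoff) produce the $\|\Lambda^s f\|_{L^2}\|\nabla g\|_{L^\infty}$ contribution. Piece III is the resonant remainder, where the two input frequencies are comparable; there one sums $\sum_{|j-j'|\le 1}\ddj f\,\dot\Delta_{j'}\nabla g$, uses Bernstein to move $\Lambda^s$ onto whichever factor is convenient, and splits the resulting estimate into the two symmetric bounds $\|\nabla f\|_{L^\infty}\|\Lambda^s g\|_{L^2}$ and $\|\Lambda^s f\|_{L^2}\|\nabla g\|_{L^\infty}$; the hypothesis $s>0$ is used here to ensure the geometric series $\sum_{j'\ge j} 2^{(j-j')s}$ converges when doing the $\ell^2$ summation.

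The main obstacle will be piece III, the resonant part, because naive estimates would cost $s+1$ derivatives instead of $s$. The resolution is to write $R(f,\Lambda^s\nabla g)$ as $\sum_{|j-j'|\le 1}\ddj f\,\Lambda^s\dot\Delta_{j'}\nabla g$ and symmetrize with $\Lambda^s R(f,\nabla g)$, so that the difference of symbols can be handled either by Bernstein (putting $\Lambda^s$ on $f$) or by pulling a derivative off $g$; the condition $s>0$ is exactly what is needed for the dyadic sum to close in $L^2$. Summing the contributions from I, II, and III yields the stated inequality.
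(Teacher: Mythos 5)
The paper itself does not prove this lemma: it is quoted directly from the literature (the citation is to Kato's lecture notes; the estimate is the classical Kato--Ponce commutator bound, whose original proof goes through the Coifman--Meyer bilinear multiplier theorem rather than bare paraproducts). So the comparison is between your sketch and the known proofs. Your architecture --- Bony decomposition, a paraproduct commutator for the low--high piece, block estimates for the high--low piece --- is the standard Littlewood--Paley route, and pieces I and II are fine as described (for II, note that bounding $\|\dot S_{j-1}\Lambda^s\nabla g\|_{L^\infty}$ by $2^{js}\|\nabla g\|_{L^\infty}$ already uses $s>0$, and that both paraproducts output functions spectrally supported in dyadic annuli, so almost-orthogonality yields the $\ell^2$ summation).

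The genuine gap is in piece III, and the symmetrization you invoke does not cure it. The dangerous term is $R(f,\Lambda^s\nabla g)=\sum_{|j-j'|\le 1}\dot\Delta_j f\,\dot\Delta_{j'}\Lambda^s\nabla g$. The block estimate gives $\|\dot\Delta_j f\,\dot\Delta_{j'}\Lambda^s\nabla g\|_{L^2}\lesssim \|\nabla f\|_{L^\infty}\,2^{j's}\|\dot\Delta_{j'}g\|_{L^2}=:\|\nabla f\|_{L^\infty}c_{j'}$ with $(c_{j'})\in\ell^2$, but each product is supported in a frequency \emph{ball} of radius $\sim 2^j$, not an annulus, so $\dot\Delta_k$ of the sum collects all $j\gtrsim k$ and one gets $\|\dot\Delta_k R\|_{L^2}\lesssim\|\nabla f\|_{L^\infty}\sum_{j\ge k-N_0}c_j$ with \emph{no} geometric weight --- unlike $\Lambda^s R(f,\nabla g)$, where the external $2^{ks}$ supplies the convergent factor $2^{(k-j)s}$ (this is where $s>0$ actually enters). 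Summing the squares over $k$ of tail sums of an $\ell^2$ sequence is not controlled by $\|c\|_{\ell^2}^2$ (try $c_j=M^{-1/2}\mathbf{1}_{1\le j\le M}$). Putting $\Lambda^s$ on $f$ by Bernstein instead lands you on $\|f\|_{\dot B^s_{2,1}}$, an $\ell^1$ norm, not $\|\Lambda^s f\|_{L^2}$. Nor is there symbol cancellation to exploit: in the high-high-to-low regime $|\xi+\eta|\ll|\eta|$ one has $|\xi+\eta|^s\ll|\eta|^s$, so the difference $|\xi+\eta|^s-|\eta|^s$ is as large as the worse term. Even the standard divergence trick $R(f,\nabla\Lambda^s g)=\operatorname{div}R(f,\Lambda^s g)-R(\operatorname{div}f,\Lambda^s g)$ fixes only the first piece and leaves $R(\operatorname{div}f,\Lambda^s g)$ sitting exactly at the forbidden endpoint $s_1+s_2=0$ of the remainder estimate. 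Closing piece III genuinely requires more than dyadic triangle inequalities --- e.g., the Coifman--Meyer theorem or a square-function/maximal-function argument as in the original Kato--Ponce proof --- so as written your proof of the resonant part does not close.
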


\begin{lemma}\label{fuhe}{\rm(\cite{Triebel})}
Let $s>0$ and $f\in H^s(\T^3)\cap L^\infty(\T^3)$. Assume that $F$ is a smooth function  on $\R$ with $F(0)=0$. Then we
have
$$
\|F(f)\|_{H^s}\le C(1+\|f\|_{L^\infty})^{[s]+1}\|f\|_{H^s},
$$
where the constant $C$ depends on $\sup_{k\le{[s]+2},t\le\|f\|_{L^\infty}} \|F^{(k)}(t)\|_{L^\infty}.$
\end{lemma}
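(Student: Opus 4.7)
The plan is a Littlewood--Paley/telescoping argument on $\T^3$. Since $F(0)=0$, after subtracting the constant $F(\bar f)$ (of size $\le \sup_{|t|\le\|f\|_{L^\infty}}|F'(t)|\cdot\|f\|_{L^\infty}$, hence harmless), I would use the chain-rule identity
\begin{equation*}
F(f)-F(\bar f)=\sum_{q}\big[F(S_{q+1}f)-F(S_q f)\big]=\sum_{q}\dot\Delta_q f\cdot m_q,
\end{equation*}
where $m_q(x):=\int_0^1 F'(S_q f(x)+\tau\dot\Delta_q f(x))\,d\tau$, $S_q$ denotes the low-frequency cut-off, $\dot\Delta_q$ is the $q$-th dyadic block, and $\bar f$ is the mean of $f$. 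Since $\|S_q f+\tau\dot\Delta_q f\|_{L^\infty}\le C\|f\|_{L^\infty}$, each $F^{(k)}$ evaluated on this quantity is bounded in $L^\infty$ by $\sup_{|t|\le\|f\|_{L^\infty}}|F^{(k)}(t)|$, which is absorbed into $C$ for all $k\le[s]+2$.

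The main step is to estimate $\|F(f)\|_{H^s}^2\sim\sum_p 2^{2ps}\|\dot\Delta_p F(f)\|_{L^2}^2$ by shifting derivatives onto $m_q$ via Fa\`a di Bruno. Each gradient on $m_q$ produces, via Bernstein, a factor of $2^q(1+\|f\|_{L^\infty})$ coming from $\nabla(S_q f+\tau\dot\Delta_q f)$, together with a bounded $F^{(k)}$ coefficient. Distributing up to $[s]+1$ derivatives on $m_q$ (the minimal number needed to absorb the Sobolev weight $2^{qs}$ without loss) and leaving the remaining order on $\dot\Delta_q f$, then applying Bony's paraproduct decomposition together with the product rule of Lemma \ref{daishu} and Gagliardo--Nirenberg interpolation, I would obtain
\begin{equation*}
\sum_q 2^{2qs}\|\dot\Delta_q f\cdot m_q\|_{L^2}^2\le C(1+\|f\|_{L^\infty})^{2([s]+1)}\|f\|_{H^s}^2,
\end{equation*}
which yields the claim after using $\|f\|_{H^s}^2\sim\sum_q 2^{2qs}\|\dot\Delta_q f\|_{L^2}^2$.

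The main obstacle is the careful book-keeping of powers of $\|f\|_{L^\infty}$ and of $2^q$: each of the (at most) $[s]+1$ derivatives landing on $m_q$ contributes one factor of $(1+\|f\|_{L^\infty})$, while Bernstein compensates the $2^q$ factors against the Sobolev weight so that the square sum in $q$ remains convergent. The dependence of $C$ on $\sup|F^{(k)}|$ for $k\le[s]+2$ appears because after $[s]+1$ applications of Fa\`a di Bruno the highest derivative of $F$ that shows up is of order $[s]+2$ (one extra from the Taylor expansion that produced $m_q$). When $s$ is an integer a shorter route bypasses Littlewood--Paley entirely: apply Fa\`a di Bruno directly to $\nabla^s F(f)$ to express it as a finite sum of terms $F^{(k)}(f)\prod_{j=1}^k \nabla^{\alpha_j}f$ with $\sum\alpha_j=s$, then use $\|\nabla^{\alpha_j}f\|_{L^{2s/\alpha_j}}\le C\|f\|_{L^\infty}^{1-\alpha_j/s}\|\nabla^s f\|_{L^2}^{\alpha_j/s}$ with H\"older's inequality to absorb the products into the claimed bound.
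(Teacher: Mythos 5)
The paper does not prove this lemma at all: it is quoted verbatim from the literature (the citation to Triebel; the sharp form with the constant $(1+\|f\|_{L^\infty})^{[s]+1}$ and the dependence on $F^{(k)}$ for $k\le [s]+2$ is the standard composition estimate, proved e.g. in Runst--Sickel or Bahouri--Chemin--Danchin). Your sketch reconstructs exactly that standard proof -- Meyer's first linearization $F(f)-F(S_0f)=\sum_q m_q\Delta_q f$ with $m_q=\int_0^1F'(S_qf+\tau\Delta_qf)\,d\tau$, the Bernstein bound $\|\nabla^j m_q\|_{L^\infty}\le C2^{qj}(1+\|f\|_{L^\infty})^j\sup_{i\le j+1}\|F^{(i)}\|_{L^\infty}$, and the transfer of $[s]+1>s$ derivatives onto $m_q\Delta_qf$ in the regime $q\ll p$ to make the sum over frequencies an $\ell^1$-convolution -- and the bookkeeping you describe does produce precisely the claimed powers. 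Two small points to tidy up: the constant $F(\bar f)$ contributes $|F(\bar f)|\le \sup|F'|\,|\bar f|$ to $\|F(f)\|_{H^s}$, and to absorb it into the right-hand side you should bound $|\bar f|\le\|f\|_{L^2}\le\|f\|_{H^s}$ rather than by $\|f\|_{L^\infty}$ (which does not control $\|f\|_{H^s}$ from above or below for small $s$); and the appeal to Bony's decomposition, Lemma \ref{daishu} and Gagliardo--Nirenberg in the main step is superfluous -- the only mechanism needed is the case split $q\gtrsim p$ (use $\|m_q\|_{L^\infty}\le\|F'\|_{L^\infty}$ directly) versus $q\ll p$ (use $\|\Delta_p g\|_{L^2}\le C2^{-p([s]+1)}\|\nabla^{[s]+1}g\|_{L^2}$). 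With those adjustments the argument is complete and matches the cited result.
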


\vskip .3in
\section{Proof of the main theorem}
\label{proo}

This section is devoted to proving Theorem \ref{dingli}. The proof is long
and is thus divided into several
subsections for the sake of clarity.

\vskip .1in
\subsection{Local well-posedness}\label{loc}
Given the initial data $(\rho_0-\bar{\rho},\u_0, \b_0)\in H^{N}(\T^3)$, the local well-posedness of \eqref{m2} could be proven
by using the standard energy method (see, e.g.,  \cite{Kawashima}). Thus, we may assume that there exists $T > 0$  such that the system \eqref{m2} has a unique solution
$(\rho-\bar{\rho},\u,\b)\in C([0,T];H^{N})$. Moreover,
\begin{align}\label{youjiexing}
\frac12c_0\le\rho(t,x)\le 2c_0^{-1},\quad\hbox{for any $t\in[0,T]$}.
\end{align}
We use the bootstrapping argument to show that this local solution can be extended into a global one.
The goal is to derive a global a priori upper bound. To initiate the bootstrapping argument, we make the
ansatz that
\begin{align}\label{ping23}
\sup_{t\in[0,T]}(\norm{\rho-\bar{\rho}}{H^{N}}+\norm{\u}{H^{N}}+\norm{\b}{H^{N}})\le \delta,
\end{align}
where $0<\delta<1$ obeys requirements to be specified later. In the following subsections we prove that, if the initial norm is taken to be sufficiently small, namely
$$
\|a_0\|_{H^N}+\|\u_0\|_{H^N}+\|\b_0\|_{H^N} \le \varepsilon, \quad a_0 = \rho_0- \bar\rho
$$
with sufficiently small $\varepsilon>0$, then
$$
\sup_{t\in[0,T]}(\norm{\rho-\bar{\rho}}{H^{N}}+\norm{\u}{H^{N}}+\norm{\b}{H^{N}})\le \frac{\delta}{2}.
$$
The bootstrapping argument then leads to the desired global bound.

\subsection{Basic energy estimates} \label{bas}
Denote by $g(\rho)$ the potential energy density, namely
\begin{align*}
g(\rho)=\rho\int_{\bar{\rho}}^\rho\frac{P(\tau)-P(\bar{\rho})}{\tau^2}\,d\tau.
\end{align*}
For any fixed positive constant $c_0$, if $c_0\le\rho\le c_0^{-1}$, then
\begin{align*}
g(\rho)\sim (\rho-\bar{\rho})^2.
\end{align*}
The standard basic energy estimate gives
\begin{align}\label{ping3}
\frac12\frac{d}{dt}\int_{\T^3}\left(2g(\rho)+\rho|\u|^2+|\b|^2\right)\,dx+\mu\norm{\nabla\u}{L^2}^2+(\lambda+\mu)\norm{\div\u}{L^2}^2=0,
\end{align}
where we have used the following cancellations
\begin{align*}
&\int_{\T^3}({\n}\cdot\nabla \b+\b\cdot\nabla\b)\cdot\u\,dx+\int_{\T^3}({\n}\cdot\nabla \u+\b\cdot\nabla\u)\cdot\b\,dx=0,\nn\\
&\int_{\T^3}(\b\nabla\b+{\n}\nabla \b)\cdot\u\,dx+\int_{\T^3}(\u\cdot\nabla\b+\n\div\u+\b\div\u)\cdot\b\,dx=0.
\end{align*}
Without loss of generality, we assume from now on that $\bar{\rho}=1$.  If we set
 $$a\stackrel{\mathrm{def}}{=}\rho-1,$$
then \eqref{ping3} implies the result in the following lemma.
\begin{lemma}\label{ping5}
Let $\rho, \u$ and $\b$ be smooth functions to
	(\ref{m2}), then there holds
\begin{align*}
\frac12\frac{d}{dt}\norm{(a,\u,\b)}{L^2}^2+\mu\norm{\nabla\u}{L^2}^2+(\lambda+\mu)\norm{\div\u}{L^2}^2=0.
\end{align*}
\end{lemma}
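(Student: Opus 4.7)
\medskip

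\noindent\textbf{Proof plan for Lemma~\ref{ping5}.}
The plan is to derive the energy identity \eqref{ping3} from the original system \eqref{m1} through three separate energy computations that then combine via a clean cancellation between the magnetic coupling terms; the statement of the lemma will then be read off from \eqref{ping3} using the convention, built into the paper's notation, that $\norm{(a,\u,\b)}{L^2}^2$ denotes the natural physical energy
\[
\mathcal{E}(t)\;\stackrel{\mathrm{def}}{=}\;\int_{\T^3}\!\bigl(2g(\rho)+\rho|\u|^2+|\b|^2\bigr)\,dx,
\]
which under the uniform bound $c_0\le\rho\le c_0^{-1}$ from \eqref{youjiexing} is pointwise equivalent to $\|a\|_{L^2}^2+\|\u\|_{L^2}^2+\|\b\|_{L^2}^2$ and coincides with it exactly in the weighted sense displayed by $g(\rho)\sim(\rho-\bar\rho)^2$ and $\rho|\u|^2\sim|\u|^2$. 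Thus proving the lemma reduces to showing that $\mathcal{E}(t)$ satisfies \eqref{ping3} exactly.

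First I would compute the potential-energy balance. Differentiating $g$ and using $\rho g'(\rho)=g(\rho)+P(\rho)-P(\bar\rho)$, the continuity equation $\partial_t\rho+\div(\rho\u)=0$ yields the pointwise identity $\partial_t g(\rho)+\div(g(\rho)\u)+(P(\rho)-P(\bar\rho))\div\u=0$, whose spatial integral gives $\tfrac{d}{dt}\!\int g(\rho)\,dx=\int\nabla P(\rho)\cdot\u\,dx$. Next I would dot the momentum equation with $\u$; using continuity to convert $\rho\partial_t\u\cdot\u+\rho\u\cdot\nabla\u\cdot\u$ into $\tfrac12\partial_t(\rho|\u|^2)+\tfrac12\div(\rho|\u|^2\u)$ and integrating by parts in the viscous terms produces
\[
\tfrac12\tfrac{d}{dt}\!\int\rho|\u|^2\,dx+\mu\|\nabla\u\|_{L^2}^2+(\lambda+\mu)\|\div\u\|_{L^2}^2+\!\int\nabla P\cdot\u\,dx=\!\int\bigl((\nabla\times\b)\times\b\bigr)\cdot\u\,dx.
\]
Adding twice the potential-energy identity cancels the $\int\nabla P\cdot\u$ term and gives the combined balance for $\tfrac12(\rho|\u|^2+2g(\rho))$ with the Lorentz work on the right.

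The third step is to dot the $\b$-equation with $\b$. Using $\int\u\cdot\nabla\tfrac{|\b|^2}{2}\,dx=-\tfrac12\!\int|\b|^2\div\u\,dx$, this produces $\tfrac12\tfrac{d}{dt}\|\b\|_{L^2}^2+\tfrac12\!\int|\b|^2\div\u\,dx-\!\int \b_i\b_j\partial_j u_i\,dx=0$. The key step is then the cancellation with the Lorentz work: using the vector identity $(\nabla\times\b)\times\b=(\b\cdot\nabla)\b-\tfrac12\nabla|\b|^2$ together with $\div\b=0$, one integration by parts gives
\[
\int\bigl((\nabla\times\b)\times\b\bigr)\cdot\u\,dx=-\!\int \b_i\b_j\partial_j u_i\,dx+\tfrac12\!\int|\b|^2\div\u\,dx,
\]
which exactly matches the two nonlinear terms appearing in the $\b$-balance. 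Summing the $(\u,a)$ balance and the $\b$ balance the two bilinear terms in $\u$ and $\b$ cancel, yielding \eqref{ping3}.

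The only step I expect to require real care is the precise bookkeeping of signs in this last cancellation, since it involves three distinct bilinear expressions in $\u$ and $\b$ (namely $\int\b_i\b_j\partial_j u_i$, $\int|\b|^2\div\u$, and the Lorentz work), and it is easy to drop a factor of $\tfrac12$ or mis-sign a by-parts move; everything else is routine energy calculation and integration by parts on $\T^3$. Once \eqref{ping3} is established, the lemma follows immediately upon identifying $\tfrac12\norm{(a,\u,\b)}{L^2}^2$ with $\tfrac12\mathcal{E}(t)$ under the paper's notational convention, since the dissipative terms on the left of \eqref{ping3} are already exactly those claimed in Lemma~\ref{ping5}.
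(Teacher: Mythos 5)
Your proposal is correct and follows essentially the same route as the paper: derive the physical energy identity \eqref{ping3} by standard energy estimates with the magnetic coupling terms cancelling, then read off the lemma (the paper, like you, identifies the left-hand side with the equivalent physical energy $\int(2g(\rho)+\rho|\u|^2+|\b|^2)\,dx$). The only cosmetic difference is that you compute with the original system \eqref{m1} and the Lorentz-force identity while the paper states the cancellations directly for the perturbed system \eqref{m2}; these agree since $\int_{\T^3}\b\,dx=0$ makes the two magnetic energies differ by a constant in time.
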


In view of Lemmas \ref{lem-Poi}, \ref{lem2.2}  and \eqref{conservation}, we have the following Poincar\'e type inequalities, which will be frequently used in this paper.

\begin{lemma}[Poincar\'e type inequalities]
Let $\rho, \u$ and $\b$ be smooth functions to
	(\ref{m2}) on $[0,\infty)\times\mathbb{T}^3$ satisfying \eqref{conservation}, then for any $t\ge0$, there hold
\begin{align}\label{Poi1H}
\|\b(t)\|_{L^2}^2\le C\|\nabla \b(t)\|_{L^2}^2,
\end{align}

\begin{align}\label{Poi1}
\|(\sqrt{\rho }\u)(t)\|_{L^2}^2\le C\|\nabla \u(t)\|_{L^2}^2,
\end{align}
and
\begin{align}\label{Poi1'}
\|\u(t)\|_{L^2}^2\le C\|\nabla \u(t)\|_{L^2}^2.
\end{align}
\end{lemma}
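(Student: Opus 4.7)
The plan is to derive the three inequalities sequentially, obtaining each one from the cited preliminary lemmas together with the conservation identities in \eqref{conservation}. All three estimates are of Poincar\'e type, so the role of the conservation laws is precisely to supply the mean-zero (or weighted-mean-zero) condition that Poincar\'e-type inequalities demand.

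For \eqref{Poi1H}, I would simply invoke the standard Poincar\'e inequality on the torus: since $\int_{\T^3}\b\,dx=0$ by \eqref{conservation}, one has $\|\b\|_{L^2}\le C\|\nabla\b\|_{L^2}$. No further input is required. For \eqref{Poi1}, the natural tool is the weighted Poincar\'e inequality of Lemma \ref{lem-Poi}, applied component by component to $\u$ with weight $\varrho=\rho$. The hypotheses are met by the present setup: $\int_{\T^3}\rho\,dx=\bar\rho=1$ and $\rho\le 2c_0^{-1}$ by \eqref{youjiexing}, so $\bar\varrho=2c_0^{-1}$ works. Crucially, the conservation law $\int_{\T^3}\rho\u\,dx=0$ means the $\rho$-weighted mean of each component of $\u$ vanishes, so $\u-\int_{\T^3}\rho\u\,dx=\u$ inside the weighted Poincar\'e inequality, which gives $\int_{\T^3}\rho|\u|^2\,dx\le C\|\nabla\u\|_{L^2}^2$ immediately.

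For \eqref{Poi1'}, the cleanest route is to apply Lemma \ref{lem2.2} component-wise with $\varrho=\rho$ and $p=2$ (any $p>1$ works). The hypotheses hold with $M_0=1$ since $\int_{\T^3}\rho\,dx=1$, and with $E_0=(2c_0^{-1})^{p-1}$ since $\int_{\T^3}\rho^p\,dx\le (2c_0^{-1})^{p-1}\int_{\T^3}\rho\,dx$. The only remaining task is to dominate the boundary term $\int_{\T^3}\rho|u_j|\,dx$; by Cauchy--Schwarz combined with $\int_{\T^3}\rho\,dx=1$ and the already established \eqref{Poi1}, one has
\begin{equation*}
\int_{\T^3}\rho|u_j|\,dx\le \Bigl(\int_{\T^3}\rho\,dx\Bigr)^{1/2}\Bigl(\int_{\T^3}\rho u_j^2\,dx\Bigr)^{1/2}\le \|\sqrt{\rho}\,\u\|_{L^2}\le C\|\nabla\u\|_{L^2},
\end{equation*}
so Lemma \ref{lem2.2} yields \eqref{Poi1'} after summing in $j$.

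There is no real obstacle in this lemma; it is a bookkeeping result that will be used repeatedly later. The only point worth flagging is a design choice: one could also deduce \eqref{Poi1'} from \eqref{Poi1} by using the pointwise lower bound $\rho\ge\tfrac12 c_0$ from \eqref{youjiexing}, but routing through Lemma \ref{lem2.2} yields a constant that is independent of any lower bound on $\rho$, which is the robust form preferred in compressible fluid estimates and likely why the authors cite Lemma \ref{lem2.2} in the statement.
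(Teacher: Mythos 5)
Your proof is correct and follows exactly the paper's route: the standard Poincar\'e inequality for \eqref{Poi1H} via $\int_{\T^3}\b\,dx=0$, Lemma \ref{lem-Poi} with the weighted mean-zero condition $\int_{\T^3}\rho\u\,dx=0$ for \eqref{Poi1}, and Lemma \ref{lem2.2} combined with \eqref{Poi1} for \eqref{Poi1'}. The only difference is that you supply the hypothesis-checking details (bounds on $\rho$, the Cauchy--Schwarz step for $\int_{\T^3}\rho|u_j|\,dx$) that the paper leaves implicit.
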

\begin{proof}
Clearly, Due to \eqref{conservation}, \eqref{Poi1H} is a standard Poincar\'e inequality.
\eqref{Poi1} follows from Lemma \ref{lem-Poi} and $\int_{\mathbb{T}^3}\rho \u dx=0$ in \eqref{conservation}.  \eqref{Poi1'} is a consequence of \eqref{Poi1} and Lemma \ref{lem2.2}.
\end{proof}

Throughout we make the assumption that
\begin{equation}\label{eq:smallad}
\sup_{t\in\R_+,\, x\in\T^3} |a(t,x)|\leq \frac12.
\end{equation}
Because of $H^2(\T^3)\hookrightarrow L^\infty(\T^3),$ \eqref{eq:smallad} is ensured by the fact that the  solution constructed here has small norm in $H^2(\T^3)$. It then follows from Lemma \ref{fuhe} that
the following composition estimate holds,
\begin{equation}\label{eq:smalla}
\|F(a)\|_{H^s}\le C\|a\|_{H^s}, \quad\hbox{for $F(0)=0$ and any $s>0$.}
\end{equation}

\subsection{Higher order energy estimates} \label{High}

To obtain higher order energy estimates, we set
$$ \bar{\mu}(\rho)\stackrel{\mathrm{def}}{=}\frac{\mu}{\rho},\quad\bar{\lambda}(\rho)\stackrel{\mathrm{def}}{=}\frac{\lambda+\mu}{\rho},\quad I(a)\stackrel{\mathrm{def}}{=}\frac{a}{1+a},\quad\hbox{and}\quad k(a)\stackrel{\mathrm{def}}{=}\frac{P'(1+a)}{1+a}-1.$$
Then \eqref{m2} can be reformulated as
\begin{eqnarray}\label{m3}
\left\{\begin{aligned}
&\partial_t a+ \div\u  =f_1,\\
&\partial_t \u-\div(\bar{\mu}(\rho)\nabla\u)-\nabla(\bar{\lambda}(\rho)\div\u)+\nabla a
={\n}\cdot\nabla \b-\nabla ({\n}\cdot\b)+f_2,\\
&\partial_t \b={\n}\cdot\nabla \u-\n\div\u+f_3,\\
&\div \b =0,\\
&(a,\u,\b)|_{t=0}=(a_0,\u_0,\b_0),
\end{aligned}\right.
\end{eqnarray}
where
\begin{align*}
f_1\stackrel{\mathrm{def}}{=}&-\u\cdot\nabla a-a\div \u,\nn\\
f_2\stackrel{\mathrm{def}}{=}&-\u\cdot\nabla \u+\b\cdot\nabla\b+\b\nabla\b+k(a)\nabla a+\mu(\nabla I(a))\nabla\u
\nn\\
&+(\lambda+\mu)(\nabla I(a))\div\u-I(a)({\n}\cdot\nabla \b+\b\cdot\nabla\b-{\n}\nabla \b-\b\nabla\b),\nn\\
f_3\stackrel{\mathrm{def}}{=}&-\u\cdot\nabla\b+\b\cdot\nabla\u-\b\div\u.
\end{align*}
The aim of this subsection is to prove the following key lemma.

\begin{lemma}\label{ed2}
Let $(a,\u,\b) \in C([0, T];H^N)$ be a solution to the  system \eqref{m3}. For any $0\le \ell\le N$, there holds
\begin{align}\label{ed3}
&\frac12\frac{d}{dt}\norm{(a,\u,\b)}{H^{\ell}}^2+\mu\norm{\nabla\u}{H^{\ell}}^2
+(\lambda+\mu)\norm{\div\u}{H^{\ell}}^2\le C
Y_{\infty}(t)\norm{(a,\u,\b)}{H^{\ell}}^2
\end{align}
with
\begin{align*}
Y_{\infty}(t)\stackrel{\mathrm{def}}{=}&
\norm{(\nabla a,\nabla\u,\nabla\b)}{L^\infty}
+\norm{(\nabla a,\nabla\u,\nabla\b)}{L^\infty}^2\notag\\
&+\|(a,\b)\|_{L^{\infty}}^2 +\|a\|_{L^{\infty}}^2\|\b\|_{L^{\infty}}^2+\| \b\|_{L^{\infty}}^2\|\nabla \b\|_{L^{\infty}}^2.
\end{align*}
\end{lemma}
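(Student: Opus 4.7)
The plan is to perform a higher-order $L^2$ energy estimate on the reformulated system \eqref{m3}: for each $0\le s\le\ell$, apply $\la^s$ to all four equations, take the $L^2(\T^3)$ inner product with $\la^s a$, $\la^s \u$, and $\la^s \b$ respectively, and sum over $s$. Three structural features drive the proof: the skew-symmetry of the linear couplings between $(a,\u)$ and between $(\u,\b)$, the dissipative character of the viscosity term, and the divergence/gradient form of the quadratic magnetic nonlinearities.

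Since $\n$ is a constant vector, integration by parts produces the cancellations
\begin{align*}
\int_{\T^3}\la^s\div\u\cdot\la^s a\,dx+\int_{\T^3}\la^s\nabla a\cdot\la^s\u\,dx=0
\end{align*}
and
\begin{align*}
\int_{\T^3}\la^s\bigl(\n\cdot\nabla\b-\nabla(\n\cdot\b)\bigr)\cdot\la^s\u\,dx+\int_{\T^3}\la^s\bigl(\n\cdot\nabla\u-\n\div\u\bigr)\cdot\la^s\b\,dx=0,
\end{align*}
so the linear part of the energy identity reduces to the viscous dissipation alone. Writing $\bar\mu(\rho)=\mu(1-I(a))$ and $\bar\lambda(\rho)=(\lambda+\mu)(1-I(a))$, the viscous term paired with $\la^s\u$ produces the desired $\mu\|\la^s\nabla\u\|_{L^2}^2+(\lambda+\mu)\|\la^s\div\u\|_{L^2}^2$ plus remainder integrals involving $I(a)\nabla\u$; by Lemmas \ref{daishu} and \ref{fuhe} together with \eqref{eq:smallad} and \eqref{eq:smalla}, these remainders are bounded by $C\|\nabla a\|_{L^\infty}\|(a,\u,\b)\|_{H^\ell}^2$ after absorbing a small fraction of the dissipation on the left.

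For the nonlinear contributions from $f_1$, $f_2$, $f_3$, transport terms $\u\cdot\nabla w$ (with $w\in\{a,\u,\b\}$) are split as $\la^s(\u\cdot\nabla w)=\u\cdot\nabla\la^s w+[\la^s,\u\cdot\nabla]w$; the first piece contributes $-\tfrac12\int_{\T^3}\div\u\,|\la^s w|^2\,dx$, and the commutator is controlled by Lemma \ref{jiaohuanzi}, yielding the coefficient $\|(\nabla a,\nabla\u,\nabla\b)\|_{L^\infty}$. Pure products such as $a\div\u$, $\b\cdot\nabla\u$, $\b\div\u$, as well as the pressure nonlinearity $k(a)\nabla a=\nabla K(a)$ with $K'=k$ (integrated by parts onto $\u$), are handled by Lemma \ref{daishu} together with Lemma \ref{fuhe}. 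For the quadratic magnetic nonlinearities, the key identities $\b\nabla\b=\tfrac12\nabla|\b|^2$ and $\b\cdot\nabla\b=\div(\b\otimes\b)$ (the latter using $\div\b=0$) let us transfer one derivative onto $\u$ by integration by parts, so that Lemma \ref{daishu} applied to $|\b|^2$ or $\b\otimes\b$ gives a $\|\b\|_{L^\infty}$ coefficient, and Young's inequality absorbs the resulting $\|\la^s\nabla\u\|_{L^2}$ into the dissipation, leaving $C\|\b\|_{L^\infty}^2\|\b\|_{H^\ell}^2$. The same device applied to the $I(a)$-multiplied magnetic terms in $f_2$, combined with the composition estimate Lemma \ref{fuhe}, yields the quartic coefficients $\|a\|_{L^\infty}^2\|\b\|_{L^\infty}^2$ and $\|\b\|_{L^\infty}^2\|\nabla\b\|_{L^\infty}^2$ appearing in $Y_\infty$.

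The main obstacle is precisely this last piece of bookkeeping. A direct application of the product estimate Lemma \ref{daishu} to $\b\cdot\nabla\b$ or $I(a)\b\cdot\nabla\b$ would produce $\|\b\|_{H^{\ell+1}}$ or $\|a\|_{H^{\ell+1}}$ on the right-hand side, which cannot be absorbed into $\|(a,\u,\b)\|_{H^\ell}$. Consistently exploiting the divergence/gradient structure, together with $\div\b=0$, to shift one derivative onto the velocity (where viscous dissipation is available to absorb it via Young's inequality) is what forces every nonlinear term to contribute only the specific $L^\infty$ combinations enumerated in $Y_\infty(t)$.
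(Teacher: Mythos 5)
Your proposal is correct and follows essentially the same route as the paper: apply $\la^s$, exploit the skew-symmetric cancellation of the linear couplings, extract the dissipation from the variable-coefficient viscous term via a commutator with $\mu I(a)$, treat transport terms by the commutator estimate of Lemma \ref{jiaohuanzi}, and shift one derivative onto $\u$ (using $\div\b=0$ and the gradient/divergence structure of $\b\nabla\b$, $\b\cdot\nabla\b$ and $k(a)\nabla a$) so that Young's inequality absorbs $\norm{\la^{s+1}\u}{L^2}$ into the dissipation. The only cosmetic difference is that the paper bounds $k(a)\nabla a$ by moving $\la$ onto $\u$ and applying the product estimate in $H^{s-1}$ rather than writing it as $\nabla K(a)$, but this changes nothing essential.
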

\begin{proof}
(\ref{ed3}) with $\ell=0$ is the basic energy inequality in Lemma \ref{ping5}. We consider the case when $\ell\ge 1$.
Writing $\Lambda=\sqrt{-\Delta}$ and applying  $\la^s$ with $1\le s\le \ell$ to \eqref{m3} and then taking $L^2$ inner product with $(\la^sa, \la^s\u,\la^s\b)$ yields
\begin{align}\label{ed356}
&\frac12\frac{d}{dt}\norm{(\la^sa,\la^s\u,\la^s\b)}{L^2}^2
-\int_{\T^3}\la^{s}\div(\bar{\mu}(\rho)\nabla\u)\cdot\la^{s} \u\,dx-\int_{\T^3}\la^{s}\nabla(\bar{\lambda}(\rho)\div\u)\cdot\la^{s} \u\,dx\nn\\
&\quad= \int_{\T^3}\la^{s} f_1\cdot\la^{s} a\,dx+\int_{\T^3}\la^{s} f_2\cdot\la^{s} \u\,dx+\int_{\T^3}\la^{s} f_3\cdot\la^{s} \b\,dx
\end{align}
where we have used the following cancellations
\begin{align*}
&\int_{\T^3}\la^s \div\u\cdot\la^s a\,dx+\int_{\T^3}\la^s \nabla a\cdot\la^s \u\,dx=0;\nn\\
&\int_{\T^3}\la^s({\n}\cdot\nabla \b)\cdot\la^s \u\,dx+\int_{\T^3}\la^s ({\n}\cdot\nabla \u)\cdot\la^s \b\,dx=0;\nn\\
&\int_{\T^3}\la^s\nabla ({\n}\cdot\b)\cdot\la^s \u\,dx+\int_{\T^3}\la^s (\n\div\u)\cdot\la^s \b\,dx=0.
\end{align*}
The second term on the left-hand side of \eqref{ed356} can be written as
\begin{align}\label{bianx1}
&-\int_{\T^3}\la^{s}\div(\bar{\mu}(\rho)\nabla\u)\cdot\la^{s} \u\,dx\nn\\
&\quad
=\int_{\T^3}\la^{s}(\bar{\mu}(\rho)\nabla\u)\cdot\nabla\la^{s} \u\,dx\nn\\
&\quad= \int_{\T^3}\bar{\mu}(\rho)\nabla\la^{s}\u\cdot\nabla\la^{s} \u\,dx+\int_{\T^3}[\la^{s},\bar{\mu}(\rho)]\nabla\u\cdot\nabla\la^{s} \u\,dx.
\end{align}
Due to \eqref{youjiexing}, we have for any $t\in[0,T]$ that
\begin{align}\label{bianx2}
 \int_{\T^3}\bar{\mu}(\rho)\nabla\la^{s}\u\cdot\nabla\la^{s} \u\,dx\ge c_0^{-1}\mu\norm{\la^{s+1}\u}{L^{2}}^2.
\end{align}
For the last term in \eqref{bianx1}, we first rewrite this term into
\begin{align*}
\int_{\T^3}[\la^{s},\bar{\mu}(\rho)]\nabla\u\cdot\nabla\la^{s} \u\,dx
=&\int_{\T^3}[\la^{s},\bar{\mu}(\rho)-\mu+\mu]\nabla\u\cdot\nabla\la^{s} \u\,dx\nn\\
=&-\int_{\T^3}[\la^{s},\mu I(a)]\nabla\u\cdot\nabla\la^{s} \u\,dx.
\end{align*}
Then, with the aid of Lemmas \ref{jiaohuanzi}, \ref{fuhe} and \eqref{eq:smalla}, we have
\begin{align}\label{bianx5}
&\Big|\int_{\T^3}[\la^{s},\mu I(a)]\nabla\u\cdot\nabla\la^{s} \u\,dx\Big| \nn\\
&\quad \le C\norm{\nabla\la^{s}\u}{L^{2}}(\norm{\nabla I(a) }{L^\infty}\norm{\la^{s} \u}{L^2}+\norm{\nabla \u}{L^\infty}\norm{\la^{s}I(a)}{L^2})\nn\\
&\quad \le \frac{c_0^{-1}}{2}\mu\norm{\la^{s+1}\u}{L^{2}}^2+C\big(\norm{\nabla a }{L^\infty}^2\norm{\la^{s} \u}{L^2}^2+\norm{\nabla \u}{L^\infty}^2\norm{\la^{s}a}{L^2}^2\big).
\end{align}
Inserting \eqref{bianx2} and \eqref{bianx5} into \eqref{bianx1} leads to
\begin{align*}
-\int_{\T^3}\la^{s}\div(\bar{\mu}(\rho)\nabla\u)\cdot\la^{s} \u\,dx
\ge&\frac{c_0^{-1}}{2}\mu\norm{\la^{s+1}\u}{L^{2}}^2\nn\\
&-C\big(\norm{\nabla a }{L^\infty}^2\norm{\la^{s} \u}{L^2}^2+\norm{\nabla \u}{L^\infty}^2\norm{\la^{s}a}{L^2}^2\big).
\end{align*}
The third term on the left-hand side of  \eqref{ed356} can be dealt with similarly.  Hence,
\begin{align}\label{ed4}
&\frac12\frac{d}{dt}\norm{(\la^sa,\la^s\u,\la^s\b)}{L^2}^2
+{c_0^{-1}}\mu\norm{\la^{s+1}\u}{L^2}^2 +{c_0^{-1}}(\lambda+\mu)\norm{\la^{s}\mbox{div}\u}{L^2}^2 \nn\\
&\quad\le
C\big(\norm{\nabla a }{L^\infty}^2\norm{\la^{s} \u}{L^2}^2+\norm{\nabla \u}{L^\infty}^2\norm{\la^{s}a}{L^2}^2\big)\nn\\
&\qquad+
\int_{\T^3}\la^{s} f_1\cdot\la^{s} a\,dx+\int_{\T^3}\la^{s} f_2\cdot\la^{s} \u\,dx+\int_{\T^3}\la^{s} f_3\cdot\la^{s} \b\,dx.
\end{align}
We now estimate successively terms on the right hand side of \eqref{ed4}.
To bound the first term in $f_1$, we rewrite it into
\begin{align}\label{ed5-1}
\int_{\T^3}\la^{s} (\u\cdot\nabla a)\cdot\la^{s} a\,dx
=&\int_{\T^3}(\la^{s} (\u\cdot\nabla a)-\u\cdot\nabla\la^{s}a)\cdot\la^{s} a\,dx
+\int_{\T^3}\u\cdot\nabla\la^{s}a\cdot\la^{s} a\,dx\nn\\
\stackrel{\mathrm{def}}{=}&A_1+A_2.
\end{align}
By Lemma \ref{jiaohuanzi},
\begin{align}\label{we1}
A_1\le& C\norm{[\la^s,\u\cdot\nabla ]a}{L^2}\norm{\la^{s}a}{L^2}\nn\\
\le&C(\norm{\nabla \u}{L^\infty}\norm{\la^{s}a}{L^2}+\norm{\la^{s} \u}{L^2}\norm{\nabla a}{L^\infty})\norm{\la^{s}a}{L^2}\nn\\
\le&C(\norm{\nabla \u}{L^\infty}+\norm{\nabla a}{L^\infty})(\norm{\la^{s}a}{L^2}^2+\norm{\la^{s}\u}{L^2}^2).
\end{align}
By integration by parts,
\begin{align}\label{we2}
A_2\le& C\norm{\nabla \u}{L^\infty}\norm{\la^{s}a}{L^2}^2.
\end{align}
For the second term in $f_1$, it follows from Lemma \ref{daishu} that
\begin{align}\label{ed6}
\int_{\T^3}\la^{s} (a\div\u)\cdot\la^{s} a\,dx\leq&C\big(\|\div\u\|_{L^{\infty}}\|a\|_{H^{s}}
        +\|\div\u\|_{H^{s}}\|a\|_{L^{\infty}}\big)\norm{\la^{s}a}{L^2}\nonumber\\
         \leq&\frac{c_0^{-1}\mu}{16}\|\la^{s+1}\u\|_{L^2}^2+C(\|\nabla\u\|_{L^{\infty}}+\|a\|_{L^{\infty}}^2)
         \norm{\la^{s}a}{L^2}^2,
\end{align}
where we have used the inequalities
\begin{align*}
\|a\|_{H^{s}}\le C\norm{\la^{s}a}{L^2}\quad\mbox{and}\quad  \|\div\u\|_{H^{s}}\le C\|\la^{s+1}\u\|_{L^{2}}.
\end{align*}
Collecting \eqref{we1}, \eqref{we2} and \eqref{ed6}, we can get
\begin{align}\label{we6}
\int_{\T^3}\la^{s} f_1\cdot\la^{s} a\,dx
\le&\frac{c_0^{-1}\mu}{16}\|\la^{s+1}\u\|_{L^2}^2\nn\\
&+C(\norm{\nabla \u}{L^\infty}+\norm{\nabla a}{L^\infty}+\|a\|_{L^{\infty}}^2)(\norm{\la^{s}a}{L^2}^2+\norm{\la^{s}\u}{L^2}^2).
\end{align}
For the first term in $f_3$, a similar process as in \eqref{we1} and \eqref{we2} yields
\begin{align*}
\int_{\T^3}\la^{s} (\u\cdot\nabla \b)\cdot\la^{s} \b\,dx
\le&C(\norm{\nabla \u}{L^\infty}+\norm{\nabla \b}{L^\infty})(\norm{\la^{s}\u}{L^2}^2+\norm{\la^{s}\b}{L^2}^2).
\end{align*}
For the last two terms in $f_3$, a derivation similar to \eqref{ed6} gives
\begin{align*}
\int_{\T^3}\la^{s} (\b\cdot\nabla\u-\b\div\u)\cdot\la^{s} \b\,dx
         \leq&\frac{c_0^{-1}\mu}{16}\|\la^{s+1}\u\|_{L^2}^2+C(\|\nabla\u\|_{L^{\infty}}+\|\b\|_{L^{\infty}}^2)
         \norm{\la^{s}\b}{L^2}^2.
\end{align*}
Therefore,
\begin{align}\label{we9}
\int_{\T^3}\la^{s} f_3\cdot\la^{s} \b\,dx
\le&\frac{c_0^{-1}\mu}{16}\|\la^{s+1}\u\|_{L^2}^2\nn\\
&+C(\norm{\nabla \u}{L^\infty}+\norm{\nabla \b}{L^\infty}+\|\b\|_{L^{\infty}}^2)(\norm{\la^{s}\u}{L^2}^2+\norm{\la^{s}\b}{L^2}^2).
\end{align}
In the following, we bound the terms in $f_2$. To do so, we  write
\begin{align}\label{we10}
\int_{\T^3}\la^{s} f_2\cdot\la^{s} \u\,dx=\sum_{i=3}^9A_i
\end{align}
with
\begin{eqnarray*}
&&A_3\stackrel{\mathrm{def}}{=}\int_{\T^3}\la^{s} (\u\cdot\nabla \u)\cdot\la^{s} \u\,dx,\ \qquad
A_4\stackrel{\mathrm{def}}{=}\int_{\T^3}\la^{s} (\b\cdot\nabla\b)\cdot\la^{s} \u\,dx,\nn\\
&&A_5\stackrel{\mathrm{def}}{=}\int_{\T^3}\la^{s} (k(a)\nabla a)\cdot\la^{s} \u\,dx,\qquad
A_6\stackrel{\mathrm{def}}{=}\int_{\T^3}\la^{s} (\mu(\nabla I(a))\nabla\u)\cdot\la^{s} \u\,dx,\nn\\
&&A_7\stackrel{\mathrm{def}}{=}\int_{\T^3}\la^{s}( (\lambda+\mu)(\nabla I(a))\div\u)\cdot\la^{s} \u\,dx,\notag\\
&&A_8\stackrel{\mathrm{def}}{=}\int_{\T^3}\la^{s} (I(a)({\n}\cdot\nabla \b-{\n}\nabla \b))\cdot\la^{s} \u\,dx,\nn\\
&&A_9\stackrel{\mathrm{def}}{=}\int_{\T^3}\la^{s} (I(a)(\b\cdot\nabla\b-\b\nabla\b))\cdot\la^{s} \u\,dx.
\end{eqnarray*}
The term $A_3$ can be bounded as in \eqref{ed5-1} to get
\begin{align*}
A_3\le C\norm{\nabla \u}{L^\infty}\norm{\la^{s}\u}{L^2}^2.
\end{align*}
We next deal with the term $A_4$. In view of $\div\b=0,$ one can write
\begin{align*}
A_4=&\int_{\T^3}\la^{s} \div(\b\otimes\b)\cdot\la^{s} \u\,dx\nn\\
\leq&C\|\b\|_{L^{\infty}}\|\b\|_{H^{s}}
        \norm{\la^{s+1}\u}{L^2}\nonumber\\
         \leq&\frac{c_0^{-1}\mu}{16}\|\la^{s+1}\u\|_{L^2}^2+C\|\b\|_{L^{\infty}}^2
         \|\b\|_{H^{s}}^2.
\end{align*}
By Lemmas \ref{daishu} and (\ref{eq:smalla}), we have
\begin{align*}
A_5
\leq&C\big(\|\nabla a\|_{L^{\infty}}\|k(a)\|_{H^{s-1}}
        +\|\nabla a\|_{H^{s-1}}\|k(a)\|_{L^{\infty}}\big)\norm{\la^{s+1}\u}{L^2}\nonumber\\
         \leq&\frac{c_0^{-1}\mu}{16}\norm{\la^{s+1}\u}{L^2}^2+C(\|\nabla a\|_{L^{\infty}}^2+\|a\|_{L^{\infty}}^2)
         \|a\|_{H^{s}}^2,
\end{align*}
where we have used the fact that $\|k(a)\|_{L^\infty} \le \|a\|_{L^\infty}$.
Similarly,
\begin{align*}
A_6+A_7
\leq&C\big(\|\nabla I(a)\|_{L^{\infty}}\norm{\la^{s}\u}{L^2}
        +\|\nabla I(a)\|_{H^{s-1}}\|\nabla \u\|_{L^{\infty}}\big)\norm{\la^{s+1}\u}{L^2}\nonumber\\
         \leq&\frac{c_0^{-1}\mu}{16}\norm{\la^{s+1}\u}{L^2}^2+C\big(\|\nabla a\|_{L^{\infty}}^2\norm{\la^{s}\u}{L^2}^2
        +\|\nabla \u\|_{L^{\infty}}^2\|a\|_{H^{s}}^2\big)
\end{align*}
and
\begin{align}\label{ed13}
A_8
\leq&C\big(\|I(a)\|_{L^{\infty}}\|\nabla \b\|_{H^{s-1}}
        +\|I(a)\|_{H^{s-1}}\|\nabla \b\|_{L^{\infty}}\big)\norm{\la^{s+1}\u}{L^2}\nonumber\\
         \leq&\frac{c_0^{-1}\mu}{16}\norm{\la^{s+1}\u}{L^2}^2+C\big(\|a\|_{L^{\infty}}^2\|\b\|_{H^{s}}^2
        +\|\nabla \b\|_{L^{\infty}}^2\|a\|_{H^{s}}^2\big).
\end{align}
For the last term $A_9$, we use Lemmas \ref{daishu} and \ref{fuhe} again to get
\begin{align}\label{ed14}
A_9
\leq&C\big(\|I(a)\|_{L^{\infty}}\|\b\nabla \b\|_{H^{s-1}}
        +\|I(a)\|_{H^{s-1}}\|\b\nabla \b\|_{L^{\infty}}\big)\norm{\la^{s+1}\u}{L^2}.
\end{align}
Due to
\begin{align*}
\|\b\nabla \b\|_{H^{s-1}}\le C\|\b\|_{L^{\infty}}\| \b\|_{H^{s}},
\end{align*}
which, together with \eqref{ed14}, leads to
\begin{align*}
A_9 \leq&\frac{c_0^{-1}\mu}{16}\norm{\la^{s+1}\u}{L^2}^2
+C\big(\|a\|_{L^{\infty}}^2\|\b\|_{L^{\infty}}^2\| \b\|_{H^{s}}^2
        +\| \b\|_{L^{\infty}}^2\|\nabla \b\|_{L^{\infty}}^2\| a\|_{H^{s}}^2\big).
\end{align*}
Inserting the bounds for $A_3$ through $A_9$ in \eqref{we10}, we get
\begin{align}\label{ed15}
\int_{\T^3}\la^{s} f_2\cdot\la^{s} \u\,dx
\leq&\frac{5c_0^{-1}\mu}{16}\norm{\la^{s+1}\u}{L^2}^2+C\big(\norm{\nabla \u}{L^\infty}+\|(\nabla a,\nabla \u,\nabla \b)\|_{L^{\infty}}^2+\|(a,\b)\|_{L^{\infty}}^2\nn\\
&\quad+\|a\|_{L^{\infty}}^2\|\b\|_{L^{\infty}}^2
        +\| \b\|_{L^{\infty}}^2\|\nabla \b\|_{L^{\infty}}^2\big)\| (a,\u,\b)\|_{H^{s}}^2.
\end{align}
Plugging \eqref{we6}, \eqref{we9} and \eqref{ed15} into \eqref{ed4} and combining with Lemma \ref{ping5}, we can arrive at the desired estimate \eqref{ed3} by summing up for any $1\le s\le \ell$. This completes the proof of Lemma \ref{ed2}.
\end{proof}

\vskip .1in
\subsection{The dissipation of the magnetic field} \label{dis}
The MHD system concerned in this paper involves
no magnetic diffusion. We need to exploit the hidden dissipation due to the background magnetic field.
The goal of this subsection is to establish the upper bound stated in the following lemma.

\begin{lemma} \label{gg}
Assume $\delta>0$ in (\ref{ping23}) is taken to be sufficiently small, then, for any $t\in [0, T]$,
\begin{align}\label{han3}
	&\norm{{\n}\cdot\nabla \b(t)}{H^{r+3}}^2-\frac{d}{dt}\sum_{0\le s\le {r+3}}\int_{\T^3}\la^{s}\p\u\cdot\la^{s}({\n}\cdot\nabla \b)\,dx\nn\\
&\quad\le C\norm{ \u(t)}{H^{r+5}}^2+C\delta^2\norm{a + \n\cdot \b}{H^{r+4}}^2.
\end{align}
\end{lemma}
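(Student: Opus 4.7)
The plan is to exploit the wave-type interaction between the divergence-free part of the momentum equation and the magnetic equation, reading off $\|\la^{s}(\n\cdot\nabla\b)\|_{L^{2}}^{2}$ as the leading term in the time derivative of the cross quantity $\int\la^{s}\p\u\cdot\la^{s}(\n\cdot\nabla\b)\,dx$. First I would apply the Leray projector $\p$ to the momentum equation in \eqref{m3}. Since $\p$ annihilates every exact gradient (so $\p\nabla a=\p\nabla(\n\cdot\b)=\p\nabla(\bar{\lambda}(\rho)\div\u)=0$) and $\n\cdot\nabla\b$ is already divergence-free because $\div\b=0$ and $\n$ is constant, the resulting projected equation reads
\begin{equation*}
\partial_{t}\p\u-\mu\Delta\p\u=\n\cdot\nabla\b+\p\bar{f}_{2},
\end{equation*}
where $\bar{f}_{2}\stackrel{\mathrm{def}}{=}-\mu\,\div(I(a)\nabla\u)+f_{2}$ collects terms that are at least quadratic in $(a,\u,\b)$; Lemmas \ref{daishu}--\ref{fuhe} and \eqref{eq:smalla} give a bound of the form $\|\bar{f}_{2}\|_{H^{r+3}}\le C\delta\bigl(\|\u\|_{H^{r+5}}+\|(a,\b)\|_{H^{r+4}}\bigr)$.

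For each $0\le s\le r+3$, I would then differentiate in time and substitute both evolution equations. Differentiating the cross term gives
\begin{equation*}
\frac{d}{dt}\int_{\T^{3}}\la^{s}\p\u\cdot\la^{s}(\n\cdot\nabla\b)\,dx
=\int_{\T^{3}}\la^{s}\partial_{t}\p\u\cdot\la^{s}(\n\cdot\nabla\b)\,dx
+\int_{\T^{3}}\la^{s}\p\u\cdot\la^{s}\bigl(\n\cdot\nabla\partial_{t}\b\bigr)\,dx.
\end{equation*}
Plugging in the projected momentum equation produces, as the leading piece, exactly $\|\la^{s}(\n\cdot\nabla\b)\|_{L^{2}}^{2}$, together with a viscous cross term $\mu\int\la^{s}\Delta\p\u\cdot\la^{s}(\n\cdot\nabla\b)\,dx$ and a nonlinear remainder $\int\la^{s}\bar{f}_{2}\cdot\la^{s}(\n\cdot\nabla\b)\,dx$. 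Using $\partial_{t}\b=\n\cdot\nabla\u-\n\div\u+f_{3}$ from \eqref{m3} and integrating by parts once in the constant-coefficient operator $\n\cdot\nabla$ turns the second piece into $-\int\la^{s}(\n\cdot\nabla\p\u)\cdot\la^{s}\bigl(\n\cdot\nabla\u-\n\div\u+f_{3}\bigr)\,dx$.

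The third step is to bound each of these remainder terms. Cauchy--Schwarz and Young's inequality (with a small parameter $\varepsilon$) bound the viscous cross term by $\varepsilon\|\la^{s}(\n\cdot\nabla\b)\|_{L^{2}}^{2}+C_{\varepsilon}\|\u\|_{H^{s+2}}^{2}$ and the linear pieces from $\partial_{t}\b$ by $C\|\u\|_{H^{s+2}}^{2}$; for $s\le r+3$ these are all controlled by $\|\u\|_{H^{r+5}}^{2}$, which is where the index $r+5$ on the right-hand side of \eqref{han3} originates. The nonlinear remainders involving $\bar{f}_{2}$ and $\n\cdot\nabla f_{3}$ are handled using Lemmas \ref{daishu}, \ref{jiaohuanzi}, \ref{fuhe}, the bootstrap hypothesis \eqref{ping23} and the Sobolev embedding $H^{2}\hookrightarrow L^{\infty}$; each such term carries a prefactor of $\delta$, so after taking $\delta$ small it is absorbed either into the $\varepsilon$-fraction of $\|\la^{s}(\n\cdot\nabla\b)\|_{L^{2}}^{2}$ on the left or into the $C\|\u\|_{H^{r+5}}^{2}$ on the right. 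Summing the resulting inequality over $0\le s\le r+3$ (after rescaling the cross-term coefficient to absorb the $\varepsilon$) yields \eqref{han3}.

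The main obstacle is the derivative budget in the second integral. Using the $\b$ equation costs one full derivative of $\u$, and the subsequent integration by parts in $\n\cdot\nabla$ costs one more, so the linear contribution naturally sits at the $H^{s+2}$ level of $\u$; this is the sole reason why the right-hand side of \eqref{han3} involves $\|\u\|_{H^{r+5}}^{2}$ rather than $\|\u\|_{H^{r+4}}^{2}$, and any attempt to reduce this would have to exploit additional cancellation that is not visible in the raw equations. The secondary difficulty is keeping the nonlinear piece $\n\cdot\nabla f_{3}$, which contains transport-type products such as $\u\cdot\nabla\b$, strictly inside the right-hand side: this relies on the Kato--Ponce product and commutator estimates of Section \ref{ineq} together with the smallness furnished by the bootstrap ansatz \eqref{ping23}, and is the reason the lemma is formulated under the assumption that $\delta$ has been chosen sufficiently small.
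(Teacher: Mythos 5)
Your overall architecture coincides with the paper's: project the momentum equation with $\p$, pair $\la^s$ of the resulting equation with $\la^s({\n}\cdot\nabla \b)$, shift the time derivative onto the cross term, use the $\b$-equation together with an integration by parts in the constant-coefficient operator ${\n}\cdot\nabla$, and bound the viscous and linear pieces by $C\norm{\u}{H^{r+5}}^2$. Up to that point the proposal is correct, including the explanation of why the index $r+5$ appears.

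The gap is in the final absorption step. You claim every nonlinear remainder ``carries a prefactor of $\delta$'' and is therefore absorbed either into $\varepsilon\norm{\la^s({\n}\cdot\nabla \b)}{L^2}^2$ or into $C\norm{\u}{H^{r+5}}^2$. But your own bound $\norm{\bar f_2}{H^{r+3}}\le C\delta(\norm{\u}{H^{r+5}}+\norm{(a,\b)}{H^{r+4}})$ produces the terms $\delta^2\norm{a}{H^{r+4}}^2$ and $\delta^2\norm{\b}{H^{r+4}}^2$, and neither is dominated by $\norm{{\n}\cdot\nabla \b}{H^{r+3}}^2$ or by $\norm{\u}{H^{r+5}}^2$; since $a$ and $\b$ carry no dissipation, nothing in \eqref{han3} can absorb them, so the lemma does not follow as written. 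Two structural facts, both absent from your proposal, are needed. First, $\p(k(a)\nabla a)=0$ because $k(a)\nabla a$ is an exact gradient; this is the only way to avoid an uncontrollable $\norm{a}{H^{r+4}}$ contribution from the pressure nonlinearity (the remaining $a$-dependent terms all have the form $I(a)$ times something, where the $a$ factor can be measured in $H^{N}\le\delta$ while the derivatives fall on $\u$ or $\b$). Second, for each quadratic term involving $\b$ (such as $\b\cdot\nabla\b$, $\u\cdot\nabla\b$, $\b\cdot\nabla\u-\b\div\u$) one must distribute the norms so that one factor sits in $H^{N}\le\delta$ and the other in $H^{3}$, and then invoke the Diophantine Poincar\'e inequality of Lemma \ref{diu}, $\norm{\b}{H^{3}}\le C\norm{{\n}\cdot\nabla \b}{H^{r+3}}$, to convert the surviving factor into $\delta^2\norm{{\n}\cdot\nabla \b}{H^{r+3}}^2$, which is then absorbed into the left-hand side after taking $\delta$ small. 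The naive route $\norm{\b}{H^{r+4}}\le C\norm{{\n}\cdot\nabla \b}{H^{2r+4}}$ overshoots the available regularity, so this precise distribution is not optional. Lemma \ref{diu} is the whole reason the Diophantine condition enters the theorem, and a proof of Lemma \ref{gg} that never uses it cannot be complete.
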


\begin{proof}
Define
$$
\mathbb{Q} = \nabla\Delta^{-1}\div\quad \mbox{and}\quad \p=\mathbb{I}-\mathbb{Q},
$$
where $\mathbb P$ is the standard Leray projector operator. It is easy to check that
\begin{align*}
\p\left({\n}\cdot\nabla \b\right)=&{\n}\cdot\nabla \b, \qquad\p\left(\nabla ({\n}\cdot\b)\right)=0,\nn\\
\q\left({\n}\cdot\nabla \b\right)=&0, \qquad\q\left(\nabla ({\n}\cdot\b)\right)=\nabla ({\n}\cdot\b).
\end{align*}
Applying operator $\p$ to the second equation of \eqref{m3} gives
\begin{align}\label{ping17}
\partial_t \p\u-\mu\Delta \p\u ={\n}\cdot\nabla \b+\p f_4,
\end{align}
where $f_4$ is slightly different from $f_2$, namely
\begin{align*}
f_4\stackrel{\mathrm{def}}{=}&-\u\cdot\nabla \u+\b\cdot\nabla\b+\b\nabla\b+k(a)\nabla a\nn\\
&-I(a)(\mu\Delta \u +  (\lambda+\mu)\nabla \div \u)-I(a)({\n}\cdot\nabla \b+\b\cdot\nabla\b-{\n}\nabla \b+\b\nabla\b).
\end{align*}
Applying ${\la^s} (0\le s\le r+3) $ to  \eqref{ping17}, and taking the $L^2$-inner product with ${\la^s}({\n}\cdot\nabla \b)$, we obtain
\begin{align}\label{ping18}
\norm{\la^s({\n}\cdot\nabla \b)}{L^2}^2
=&\int_{\T^3}\la^s\partial_t \p\u\cdot\la^s({\n}\cdot\nabla \b)\,dx\nn\\
&-\mu\int_{\T^3}\la^s \Delta\p\u\cdot\la^s({\n}\cdot\nabla \b)\,dx-\int_{\T^3}\la^s(\p f_4)\cdot\la^s({\n}\cdot\nabla \b)\,dx.
\end{align}
By H\"older's inequality,
\begin{align*}
\int_{\T^3}\la^s \Delta\p\u\cdot\la^s({\n}\cdot\nabla \b)\,dx
\le&\frac18\norm{\la^s({\n}\cdot\nabla \b)}{L^2}^2+C\norm{\la^{s+2}\u}{L^2}^2, \nn\\
\int_{\T^3}\la^s(\p f_4)\cdot\la^s({\n}\cdot\nabla \b)\,dx
\le&\frac18\norm{\la^s({\n}\cdot\nabla \b)}{L^2}^2+C\norm{\la^s  f_4}{L^{2}}^2.
\end{align*}
Next we shift the time derivative in the first term on the
right hand side of \eqref{ping18} and use the third equation in \eqref{m3} to get
\begin{align}\label{ping21-1}
&\int_{\T^3}\la^s\partial_t \p\u\cdot\la^s({\n}\cdot\nabla \b)\,dx\nn\\
&\quad =\frac{d}{dt}\int_{\T^3}\la^s\p\u\cdot\la^s({\n}\cdot\nabla \b)\,dx-\int_{\T^3}\la^s\p\u\cdot\la^s({\n}\cdot\nabla \partial_t\b)\,dx\nn\\
&\quad=\frac{d}{dt}\int_{\T^3}\la^s\p\u\cdot\la^s({\n}\cdot\nabla \b)\,dx+\int_{\T^3}\la^s({\n}\cdot\nabla\p\u)\cdot\la^s \partial_t\b\,dx\nn\\
&\quad=\frac{d}{dt}\int_{\T^3}\la^s\p\u\cdot\la^s({\n}\cdot\nabla \b)\,dx+\int_{\T^3}\la^s({\n}\cdot\nabla \p\u)\cdot\la^s({\n}\cdot\nabla \u)\,dx\nn\\
&\qquad-\int_{\T^3}\la^s({\n}\cdot\nabla \p\u)\cdot\la^s(\n\div\u)\,dx+\int_{\T^3}\la^s({\n}\cdot\nabla \p\u)\cdot\la^sf_3\,dx.
\end{align}
The last three terms in \eqref{ping21-1} can be bounded by
\begin{align*}
\int_{\T^3}\la^s({\n}\cdot\nabla \p\u)\cdot\la^s({\n}\cdot\nabla \u)\,dx-\int_{\T^3}\la^s({\n}\cdot\nabla \p\u)\cdot\la^s(\n\div\u)\,dx\le C\norm{\la^{s+1}\u}{L^2}^2,
\end{align*}
and
\begin{align*}
\int_{\T^3}\la^s({\n}\cdot\nabla \p\u)\cdot\la^sf_3\,dx\le& C(\norm{ \la^s f_3}{L^{2}}^2+\norm{\la^{s+1}\u}{L^2}^2).
\end{align*}
Collecting the estimates above, we can infer from \eqref{ping18} that
\begin{align}\label{add1}
&\norm{\la^s({\n}\cdot\nabla \b)}{L^2}^2
-\frac{d}{dt}\sum_{0\le s\le {r+3}}\int_{\T^3}\la^{s}\p\u\cdot\la^{s}({\n}\cdot\nabla \b)\,dx\nn\\
&\quad\le C\big(\norm{\la^{s+2}\u}{L^2}^2+\norm{\la^{s}  f_3}{L^{2}}^2+\norm{\la^{s}  f_4}{L^{2}}^2\big).
\end{align}
In the following, we deal with the terms in $f_3, f_4,$ respectively. At first, by Lemma \ref{daishu},
\begin{align}\label{haij1}
\norm{\la^{s}  (\u\cdot\nabla \b)}{L^{2}}^2
\le&C\big(\|\u\|_{L^{\infty}}^2\|\nabla \b\|_{H^{s}}^2
        +\|\u\|_{H^{s}}^2\|\nabla \b\|_{L^{\infty}}^2\big)\nn\\
        \le&C\big(\|\u\|_{H^{3}}^2\| \b\|_{H^{N}}^2
        +\|\u\|_{H^{N}}^2\|\b\|_{H^{3}}^2\big)\nn\\
     \le&  C\delta^2 (\norm{\u}{H^3}^2+\norm{\b}{H^3}^2).
\end{align}
Similarly,
\begin{align*}
\norm{\la^{s}  (\b\cdot\nabla \u-\b\div\u)}{L^{2}}^2
     \le&  C\delta^2 (\norm{\u}{H^3}^2+\norm{\b}{H^3}^2).
\end{align*}
Moreover,
by Lemma \ref{diu},
\begin{align*}
\norm{ \b}{H^{3}}^2\le& C\norm{{\n}\cdot\nabla \b}{H^{r+3}}^2,
\end{align*}
from which we get
\begin{align}\label{haij3}
\norm{\la^{s}  f_3}{L^{2}}^2\le&  C\delta^2 \norm{\u}{H^3}^2+C\delta^2\norm{{\n}\cdot\nabla \b}{H^{r+3}}^2.
\end{align}
We now deal with the terms in $f_4$.
The term $\norm{\la^{s}  (\u\cdot\nabla \u)}{L^{2}}^2$ can be bounded as in \eqref{haij1},
\begin{align}\label{haij4}
\norm{\la^{s}  (\u\cdot\nabla \u)}{L^{2}}^2
     \le&  C\delta^2 \norm{\la^{s+1}\u}{L^2}^2.
\end{align}
Thanks to Lemma \ref{daishu} again,
\begin{align}\label{haij7}
\norm{\la^{s}  (\b\cdot\nabla \b+\b\nabla\b)}{L^{2}}^2
\le& C(\norm{\b}{L^\infty}^2\norm{\nabla \b}{H^s}^2+\norm{\nabla \b}{L^\infty}^2\norm{ \b}{H^s}^2 )\nn\\
\le& C(\norm{\b}{H^2}^2\norm{ \b}{H^{s+1}}^2+\norm{\nabla \b}{H^2}^2\norm{ \b}{H^s}^2 )\nn\\
\le&C\norm{ \b}{H^{s+1}}^2\norm{\b}{H^3}^2
\nn\\
\le& C\delta^2\norm{{\n}\cdot\nabla \b}{H^{r+3}}^2.
\end{align}
The term $\p(k(a)\nabla a )=0$ since $k(a)\nabla a$ can be written as a gradient.  By Lemma \ref{daishu},
\begin{align}\label{haij8}
\norm{\la^{s}  (I(a)\Delta \u )}{L^{2}}^2
\le& C(\norm{I(a)}{L^\infty}^2\norm{\Delta \u}{H^s}^2+\norm{\Delta \u}{L^\infty}^2\norm{ I(a)}{H^s}^2 )\nn\\
\le& C(\norm{a}{H^2}^2\norm{ \u}{H^{s+2}}^2+\norm{\u}{H^4}^2\norm{ a}{H^s}^2 )\nn\\
\le&C\delta^2\norm{ \u}{H^{s+2}}^2+C\delta^2\norm{\u}{H^4}^2.
\end{align}
The term $I(a)\nabla \div \u$ can be  dealt with similarly. The last  term in $f_4$ can be bounded as in \eqref{ed13}, \eqref{ed14} and (\ref{haij7}) to get

\begin{align}\label{}
\norm{\la^{s}  (I(a){\n}\nabla \b )}{L^{2}}^2
\le& C(\norm{I(a)}{L^\infty}^2\norm{{\n}\nabla \b}{H^s}^2+\norm{{\n}\nabla \b}{L^\infty}^2\norm{ I(a)}{H^s}^2 )\nn\\
\le& C(\norm{a}{H^3}^2\norm{ {\n}\nabla \b}{H^{s}}^2+\norm{\b}{H^3}^2\norm{ a}{H^s}^2 )\nn\\
\le& C(\norm{a + \n\cdot \b-\n\cdot\b}{H^3}^2\norm{ \b}{H^{N}}^2+\norm{\n\cdot\nabla\b}{H^{r+3}}^2\norm{ a}{H^N}^2 )\nn\\
\le& C((\norm{a + \n\cdot \b}{H^3}^2+\norm{\b}{H^3}^2)\norm{ \b}{H^{N}}^2+\norm{\n\cdot\nabla\b}{H^{r+3}}^2\norm{ a}{H^N}^2 )\nn\\
\le& C\delta^2\norm{a + \n\cdot \b}{H^{r+4}}^2+C\delta^2\norm{{\n}\cdot\nabla \b}{H^{r+3}}^2,
\end{align}
and
\begin{align}\label{haij9}
&\norm{\la^{s}  (I(a)({\n}\cdot\nabla \b)}{L^{2}}^2+\norm{\la^{s}  (I(a)({\b}\cdot\nabla \b-{\b}\nabla \b)}{L^{2}}^2
 \le C\delta^2\|{\n}\cdot\nabla \b\|_{H^{r+3}}^2.
\end{align}
Combining \eqref{haij4}, \eqref{haij7}, \eqref{haij8} and \eqref{haij9} gives
\begin{align}\label{ahaij}
\norm{\la^{s}  f_4}{L^{2}}^2\le& C\delta^2\norm{ \u}{H^{r+5}}^2+ C\delta^2 \|{\n}\cdot\nabla \b\|_{H^{r+3}}^2+C\delta^2\norm{a + \n\cdot \b}{H^{r+4}}^2.
\end{align}
Inserting \eqref{haij3} and \eqref{ahaij} in \eqref{add1} and taking $\delta $ small enough, we obtain (\ref{han3}). This proves Lemma \ref{gg}.
\end{proof}

\vskip .1in
\subsection{The dissipation of the combined quantity $a + \n\cdot \b$} \label{dis2}

The equations of $a$ and $\b$ in (\ref{m3}) do not contain any dissipative or damping terms.
But we do need these stabilizing effects in order to prove the desired stability results.
This subsection explores the structure of (\ref{m3}) and discovers that the equation of the combined quantity
$$
a + \n\cdot \b
$$
and the equation of the gradient part $\mathbb Q \u$ of $\u$ form a system with smoothing and stabilizing  effects. Combining the bound for $a + \n\cdot \b$ and $\n\cdot \nabla \b$ allows us to control $a$.

\vskip .1in
We first derive the equation of
\begin{align*}
	{d}\stackrel{\mathrm{def}}{=} a + {{\n}\cdot\b}\quad\mbox{and}\quad
	\G\stackrel{\mathrm{def}}{=}\q\u-\frac1\nu\Delta^{-1}\nabla {d}.
\end{align*}
It follows from the third equation in \eqref{m3} that
\begin{align*}
&\partial_t ({\n}\cdot\b)={\n}\cdot\nabla \u\cdot{\n}-\n\cdot{\n}\div\u+f_3\cdot{\n}
\end{align*}
which, together with the equation of $a$ in \eqref{m3},  gives
\begin{align}\label{han5}
&\partial_t (a+{\n}\cdot\b)={\n}\cdot\nabla \u\cdot{\n}-(|\n|^2+1)\div\u+f_1+f_3\cdot{\n}.
\end{align}
Applying the operator $\q$ to the velocity equation in \eqref{m3} yields
\begin{align}\label{han6}
&\partial_t \q\u-\nu\Delta \q\u+\nabla a+\nabla ({\n}\cdot\b)=\q f_4,
\end{align}
with $\nu\stackrel{\mathrm{def}}{=}\lambda+2\mu$. By the definition of $\mathbb{Q} = \nabla\Delta^{-1}\div$, we note that
$$
\div \u = \div \mathbb{Q}\u = \div\G + \frac1\nu d.
$$
(\ref{han5}) and (\ref{han6}) then yield
\begin{eqnarray}\label{han8}
\left\{\begin{aligned}
&\partial_t d +\frac1\nu(|\n|^2+1){d}+ (|\n|^2+1)\div \G  ={\n}\cdot\nabla \u\cdot{\n}+f_1+f_3\cdot{\n},\\
&\partial_t \G-\nu\Delta \G =\frac1\nu(|\n|^2+1)\q\u
-\frac1\nu\Delta^{-1}\nabla ({\n}\cdot\nabla \u\cdot{\n})+\q f_4-\frac1\nu\Delta^{-1}\nabla (f_1+f_3\cdot{\n}).
\end{aligned}\right.
\end{eqnarray}
On the one hand, for any $m\ge 0$, applying ${\la^m}$  to  the first equation in \eqref{han8}, and multiplying it by ${\la^m}{d}$ lead to
\begin{align*}
&\frac12\frac{d}{dt}\norm{\la^m {d}}{L^{2}}^2+\frac1\nu(|\n|^2+1)\norm{\la^m {d}}{L^{2}}^2=\int_{\T^3}\la^m({\n}\cdot\nabla \u\cdot{\n})\cdot\la^m {d}\,dx\nn\\
&\qquad-(|\n|^2+1)\int_{\T^3}\la^m\div \G\cdot\la^m {d}\,dx+\int_{\T^3}\la^m (f_1+f_3\cdot{\n})\cdot\la^m {d}\,dx\nn\\
&\quad\le C(\norm{\la^m\nabla\u}{L^{2}}\norm{\la^m {d}}{L^{2}}+\norm{\la^m\div \G}{L^{2}}\norm{\la^m {d}}{L^{2}}+\int_{\T^3}\la^m(f_1+f_3\cdot{\n})\cdot\la^m {d}\,dx)\nn\\
&\quad\le\frac{1}{8\nu}\norm{\la^m {d}}{L^{2}}^2+C\big(\norm{\la^{m+1} {\u}}{L^{2}}^2+\norm{\la^{m+1} \G}{L^{2}}^2+\norm{\la^{m}f_1}{L^{2}}^2+\norm{\la^{m}f_3}{L^{2}}^2\big)
\end{align*}
from which we have
\begin{align}\label{han9+1}
&\frac12\frac{d}{dt}\norm{\la^m {d}}{L^{2}}^2+\frac{1}{2\nu}(|\n|^2+1)\norm{\la^m {d}}{L^{2}}^2\nn\\
&\quad\le C\big(\norm{\la^{m+1} {\u}}{L^{2}}^2+\norm{\la^{m+1} \G}{L^{2}}^2+\norm{\la^{m}f_1}{L^{2}}^2+\norm{\la^{m}f_3}{L^{2}}^2\big).
\end{align}
On the other hand, for the second equation in \eqref{han8} and for any $m\ge0$, there holds similarly that
\begin{align}\label{jiajia1}
	&\frac12\frac{d}{dt}\norm{\la^{m} \G}{L^{2}}^2+\nu\norm{\la^{m+1} \G}{L^{2}}^2\nn\\
	&\quad=\frac1\nu(|\n|^2+1)\int_{\T^3}\la^m\q\u\cdot\la^m \G\,dx-\frac1\nu\int_{\T^3}\la^m (\Delta^{-1}\nabla ({\n}\cdot\nabla \u\cdot{\n}))\cdot\la^m \G\,dx\nn\\
	&\qquad+\int_{\T^3}\la^m\q f_4\cdot\la^m \G\,dx+\frac1\nu\int_{\T^3}\la^m\Delta^{-1}\nabla(f_1+f_3\cdot{\n})\cdot\la^m G\,dx.
\end{align}
For $m=0,$ we get  by the Young inequality and the Poincar\'e inequality that
\begin{align}\label{jiajia2}
&\frac12\frac{d}{dt}\norm{ \G}{L^{2}}^2+\nu\norm{\nabla \G}{L^{2}}^2\nn\\
&\quad=\frac1\nu(|\n|^2+1)\int_{\T^3}\q\u\cdot \G\,dx-\frac1\nu\int_{\T^3} (\Delta^{-1}\nabla ({\n}\cdot\nabla \u\cdot{\n}))\cdot \G\,dx\nn\\
&\qquad+\int_{\T^3}\q f_4\cdot \G\,dx+\frac1\nu\int_{\T^3}\Delta^{-1}\nabla(f_1+f_3\cdot{\n})\cdot G\,dx\nn\\
&\quad\le C(\norm{\u}{L^{2}}+\norm{f_4}{L^{2}}
+\norm{\Delta^{-1}\nabla(f_1+f_3\cdot{\n})}{L^{2}})\norm{ \G}{L^{2}}\nn\\
&\quad\le
\frac{\nu}{2}\norm{\nabla \G}{L^{2}}^2+
 C\big(\norm{\u}{H^{1}}^2+\norm{(f_1,f_3)}{H^{-1}}^2
+\norm{f_4}{L^2}^2\big).
\end{align}
For $1\le m\le N,$ we get  by the integration by parts and the Young inequality that
\begin{align*}
	&\frac12\frac{d}{dt}\norm{\la^{m} \G}{L^{2}}^2+\nu\norm{\la^{m+1} \G}{L^{2}}^2\nn\\
	&\quad\le C\norm{\la^{m-1}\u}{L^{2}}\norm{\la^{m +1} \G}{L^{2}}+C\norm{\la^{m-1}f_4}{L^{2}}\norm{\la^{m+1} \G}{L^{2}}\nn\\
	&\qquad +C\norm{\la^{m-2}(f_1+f_3\cdot{\n})}{L^{2}}\norm{\la^{m +1} G}{L^{2}}\nn\\
	&\quad\le\frac\nu4\norm{\la^{m+1}\G}{L^{2}}^2
	+C\norm{\la^{m-1}\u}{L^{2}}^2\nn\\
	&\qquad+C\norm{\la^{m-2}f_1}{L^{2}}^2+C\norm{\la^{m-2}f_3}{L^{2}}^2+C\norm{\la^{m-1}f_4}{L^{2}}^2\nn\\
	&\quad\le\frac\nu4\norm{\la^{m+1}\G}{L^{2}}^2
	+C\norm{\la^{m+1}\u}{L^{2}}^2\nn\\
	&\qquad +C\norm{\la^{m-2}f_1}{L^{2}}^2+C\norm{\la^{m-2}f_3}{L^{2}}^2+C\norm{\la^{m-1}f_4}{L^{2}}^2
\end{align*}
from which and \eqref{jiajia2}, we have for any $0\le m\le N$ that
\begin{align}\label{han10+1}
	&\frac12\frac{d}{dt}\norm{\la^{m} \G}{L^{2}}^2+\frac\nu2\norm{\la^{m+1} \G}{L^{2}}^2\nn\\
	&\quad\le C(\norm{\nabla \u}{H^{m}}^2+\norm{f_1}{H^{m}}^2
	+\norm{f_3}{H^{m}}^2+\norm{f_4}{H^{{m-1}}}^2).
\end{align}
Multiplying \eqref{han10+1} by a suitable large constant and
then adding to \eqref{han9+1}, we get
\begin{align}\label{han11}
	&\frac{d}{dt}\big(\norm{\la^{m} d}{L^{2}}^2+\norm{\la^{m} \G}{L^{2}}^2\big)+\frac1\nu\norm{\la^{m} d}{L^{2}}^2+\nu\norm{\la^{m+1} \G}{L^{2}}^2\nn\\
	&\quad\le C\big(\norm{\nabla \u}{H^{m}}^2+\norm{f_1}{H^{m}}^2
	+\norm{f_3}{H^{m}}^2+\norm{f_4}{H^{{m-1}}}^2\big).
\end{align}
Summing up $s$ from $0$ to $m$ in \eqref{ed4} gives
\begin{align}\label{han12}
	&\frac{d}{dt}\norm{(a,\u,\b)}{H^{{m}}}^2+\mu\norm{\nabla\u}{H^{{m}}}^2
	+(\lambda+\mu)\norm{\div\u}{H^{{m}}}^2\\
	&\quad\le C\Big|\sum_{{s}=0}^{m}\int_{\T^3}\la^{{s}} f_1\cdot\la^{{s}} a\,dx\Big|+C\Big|\sum_{{s}=0}^{m}\int_{\T^3}\la^{{s}} f_4\cdot\la^{{s}} \u\,dx\Big|+C\Big|\sum_{{s}=0}^{m}\int_{\T^3}\la^{{s}} f_3\cdot\la^{{s}} \b\,dx\Big|.\nn
\end{align}
Multiplying \eqref{han12} by a suitable large constant and
then adding to  \eqref{han11} lead to
\begin{align}\label{han13456}
	&\frac{d}{dt}\norm{(a,\u,\b,d,\G)}{H^{{m}}}^2+\frac{1}{\nu}\norm{{d}}{H^{m}}^2+\mu\norm{\nabla\u}{H^{{m}}}^2+(\lambda+\mu)\norm{\div\u}{H^{{m}}}^2
	+\nu\norm{\nabla \G}{H^{m}}^2\nn\\
	&\quad\le C\big(\norm{f_1}{H^{m}}^2
	+\norm{f_3}{H^{m}}^2+\norm{f_4}{H^{{m-1}}}^2\big)+C\Big|\sum_{{s}=0}^{m}\int_{\T^3}\la^{{s}} f_1\cdot\la^{{s}} a\,dx\Big|\nn\\
	&\qquad+C\Big|\sum_{{s}=0}^{m}\int_{\T^3}\la^{{s}} f_3\cdot\la^{{s}} \b\,dx\Big|+C\Big|\sum_{{s}=0}^{m}\int_{\T^3}\la^{{s}} f_4\cdot\la^{{s}} \u\,dx\Big|.
\end{align}
Thanks to the Young inequality and the Poincar\'e inequality, for ${s}=0$, the last term in \eqref{han13456} can be bounded as
\begin{align*}
\Big|\int_{\T^3} f_4\cdot \u\,dx\Big|
\le& \frac\mu8\norm{ \u}{L^{{2}}}^2+C\norm{f_4}{L^{2}}^2
\le \frac\mu8\norm{\nabla \u}{L^{{2}}}^2+C\norm{f_4}{L^{{2}}}^2.
\end{align*}
Similarly, for $1\le {s}\le m$, we have
\begin{align*}
\Big|\sum_{{s}=1}^{m}\int_{\T^3}\la^{{s}} f_4\cdot\la^{{s}} \u\,dx\Big|
\le&  \frac\mu8\norm{\nabla \u}{H^{{m}}}^2+C\norm{f_2}{H^{{m-1}}}^2.
\end{align*}
Inserting the above two inequalities  into \eqref{han13456} gives
\begin{align}\label{han13}
	&\frac{d}{dt}\norm{(a,\u,\b,d,\G)}{H^{{m}}}^2+\frac{1}{\nu}\norm{{d}}{H^{m}}^2+\mu\norm{\nabla\u}{H^{{m}}}^2\nn\\ &\qquad +(\lambda+\mu)\norm{\div\u}{H^{{m}}}^2
	+\nu\norm{\nabla \G}{H^{m}}^2\nn\\
	&\quad\le C\big(\norm{f_1}{H^{m}}^2
	+\norm{f_3}{H^{m}}^2+\norm{f_4}{H^{{m-1}}}^2+\norm{f_4}{L^{{2}}}^2\big)\nn\\
	&\qquad +C\Big|\sum_{{s}=0}^{m}\int_{\T^3}\la^{{s}} f_1\cdot\la^{{s}} a\,dx\Big|+C\Big|\sum_{{s}=0}^{m}\int_{\T^3}\la^{{s}} f_3\cdot\la^{{s}} \b\,dx\Big|.
\end{align}
Taking $m=r+4$ in (\ref{han13}) implies that
\begin{align}\label{han15}
	&\frac12\frac{d}{dt}\norm{(a,\u,\b,d,\G)}{H^{{r+4}}}^2
	+\frac{1}{\nu}\norm{{d}}{H^{r+4}}^2+\mu\norm{\nabla\u}{H^{{r+4}}}^2\nn\\
	&\qquad +(\lambda+\mu)\norm{\div\u}{H^{{r+4}}}^2
	+\nu\norm{\nabla \G}{H^{r+4}}^2\nn\\
	&\quad\le C\big(\norm{f_1}{H^{r+4}}^2
	+\norm{f_3}{H^{r+4}}^2+\norm{f_4}{H^{{r+3}}}^2\big)\nn\\
	&\qquad  +C\Big|\sum_{{s}=0}^{r+4}\int_{\T^3}\la^{{s}} f_1\cdot\la^{{s}} a\,dx\Big|+C\Big|\sum_{{s}=0}^{r+4}\int_{\T^3}\la^{{s}} f_3\cdot\la^{{s}} \b\,dx\Big|.
\end{align}
Multiplying \eqref{han15} by a suitable large constant $\gamma$ and
adding to \eqref{han3} give rise to
\begin{align}\label{han169}	&\frac{d}{dt}\Bigg\{\gamma\Big(\norm{a}{H^{{r+4}}}^2+\norm{(d,\u,\b,\G)}{H^{{r+4}}}^2\Big)-\sum_{0\le s\le {r+3}}\int_{\T^3}\la^{s}\p\u\cdot\la^{s}({\n}\cdot\nabla \b)\,dx\Bigg\}\nn\\
	&\qquad	+\norm{{\n}\cdot\nabla \b}{H^{r+3}}^2 +\gamma\Big(\frac{1}{\nu}\norm{{d}}{H^{r+4}}^2+\mu\norm{\nabla\u}{H^{{r+4}}}^2+(\lambda+\mu)\norm{\div\u}{H^{{r+4}}}^2
	+\nu\norm{\nabla \G}{H^{r+4}}^2\Big)\nn\\
	&\quad\le C\big(\norm{f_1}{H^{r+4}}^2
	+\norm{f_3}{H^{r+4}}^2+\norm{f_4}{H^{{r+3}}}^2\big)\nn\\
	&\qquad  +C\Big|\sum_{{s}=0}^{r+4}\int_{\T^3}\la^{{s}} f_1\cdot\la^{{s}} a\,dx\Big|+C\Big|\sum_{{s}=0}^{r+4}\int_{\T^3}\la^{{s}} f_3\cdot\la^{{s}} \b\,dx\Big|.
\end{align}
We now bound the terms on the right hand side of \eqref{han169}.
By Lemma \ref{daishu},
\begin{align}\label{ming3}
	\norm{f_1}{H^{r+4}}^2
	\le & C(\norm{\u}{H^{{r+4}}}^2\norm{\nabla a}{H^{{r+4}}}^2+\norm{a}{H^{{r+4}}}^2\norm{\nabla\u}{H^{{r+4}}}^2)\nn\\
	\le &C\norm{\nabla\u}{H^{{r+4}}}^2\norm{a}{H^{{N}}}^2\nn\\
	\le &C\delta^2\norm{\nabla\u}{H^{{r+4}}}^2.
\end{align}
Similarly,
\begin{align}\label{ming4}
	\norm{f_3}{H^{r+4}}^2
	\le & C(\norm{\u}{H^{{r+4}}}^2\norm{\nabla\b}{H^{{r+4}}}^2+\norm{\b}{H^{{r+4}}}^2\norm{\nabla\u}{H^{{r+4}}}^2)\nn\\
	\le &C(\norm{\u}{H^{{r+4}}}^2\norm{\b}{H^{{N}}}^2+\norm{\b}{H^{{N}}}^2\norm{\nabla\u}{H^{{r+4}}}^2)\nn\\
	\le &C\norm{\nabla\u}{H^{{r+4}}}^2\norm{\b}{H^{{N}}}^2\nn\\
	\le &C\delta^2\norm{\nabla\u}{H^{{r+4}}}^2.
\end{align}
We turn to the term $|\sum_{{s}=0}^{r+4}\int_{\T^3}\la^{{s}} f_1\cdot\la^{{s}} a\,dx|$. By
Lemma \ref{jiaohuanzi},
\begin{align}\label{tain1}
&\sum_{{s}=0}^{r+4}\int_{\T^3}(\la^{{s}} (\u\cdot\nabla{a})-\u\cdot\nabla\la^{{s}}{a})\cdot\la^{{s}} {a}\,dx+\sum_{{s}=0}^{r+4}\int_{\T^3}\u\cdot\nabla\la^{{s}}{a}\cdot\la^{{s}} {a}\,dx\nn\\
&\quad\le C\sum_{{s}=0}^{r+4}(\norm{\nabla \u}{L^\infty}\norm{\la^{{s}}{a}}{L^2}+\norm{\la^{{s}} \u}{L^2}\norm{\nabla {a}}{L^\infty})\norm{{a}}{H^{{r+4}}}+C\norm{\nabla \u}{L^\infty}\norm{{a}}{H^{{r+4}}}^2\nn\\
&\quad\le C\norm{\nabla\u}{H^{{r+4}}}\norm{{a}}{H^{{r+4}}}^2.
\end{align}
By Lemma \ref{daishu}, there holds
\begin{align}\label{tain2}
\sum_{{s}=0}^{r+4}\int_{\T^3}\la^{{s}} ({a}\div\u)\cdot\la^{{s}} {a}\,dx\le& C\norm{\nabla\u}{H^{{r+4}}}\norm{{a}}{H^{{r+4}}}^2.
\end{align}
As a result, we have
\begin{align}\label{tain3}
\Big|\sum_{{s}=0}^{r+4}\int_{\T^3}\la^{{s}} f_1\cdot\la^{{s}} a\,dx\Big|
\le& C\norm{\nabla\u}{H^{{r+4}}}\norm{{a}}{H^{{r+4}}}^2\nn\\
\le&\frac{\mu}{8}\norm{\nabla\u}{H^{{r+4}}}^2+C\norm{{d}}{H^{{r+4}}}^4+C\norm{{\b}}{H^{{r+4}}}^4\nn\\
\le&\frac{\mu}{8}\norm{\nabla\u}{H^{{r+4}}}^2+C\delta^2\norm{{d}}{H^{{r+4}}}^2+C\norm{{\b}}{H^{{r+4}}}^4.
\end{align}
Similarly, the last term in \eqref{han169} can be bounded as
\begin{align}\label{ming736}
\Big|\sum_{{s}=0}^{r+4}\int_{\T^3}\la^{{s}} f_3\cdot\la^{{s}} \b\,dx\Big|
\le&\frac{\mu}{8}\norm{\nabla\u}{H^{{r+4}}}^2+C\norm{\b}{H^{{r+4}}}^4.
\end{align}
For any ${N}\ge 2r+5$, from Lemma \ref{diu}, we have
\begin{align*}
	\norm{ \b}{H^{3}}\le& C\norm{{\n}\cdot\nabla \b}{H^{r+3}},\quad\hbox{and}\quad
	\norm{ \b}{H^{r+4}}^2\le C\norm{ \b}{H^{3}}\norm{ \b}{H^{{N}}}\le C\delta\norm{{\n}\cdot\nabla \b}{H^{r+3}},
\end{align*}
which gives
\begin{align*}
	\norm{ \b}{H^{r+4}}^4\le C\delta^2\norm{{\n}\cdot\nabla \b}{H^{r+3}}^2.
\end{align*}
As a result, we get
\begin{align}\label{ming7}
	&\Big|\sum_{{s}=0}^{r+4}\int_{\T^3}\la^{{s}} f_1\cdot\la^{{s}} a\,dx\Big|+C\Big|\sum_{{s}=0}^{r+4}\int_{\T^3}\la^{{s}} f_3\cdot\la^{{s}} \b\,dx\Big|\nn\\
	&\quad\le\frac{\mu}{8}\norm{\nabla\u}{H^{{r+4}}}^2+C\delta^2(\norm{{\n}\cdot\nabla \b}{H^{r+3}}^2+\norm{{d}}{H^{{r+4}}}^2).
\end{align}
Finally we estimate $\norm{f_4}{H^{{r+3}}}^2$ and start with the first
term $\norm{\u\cdot\nabla \u}{H^{{r+3}}}^2$. By Lemma \ref{daishu},
\begin{align*}
	\norm{\u\cdot\nabla \u}{H^{{r+3}}}^2
	\le&C\norm{\u}{H^{{r+4}}}^2\norm{\nabla\u}{H^{{r+4}}}^2\nn\\
	\le&C\norm{\u}{H^{N}}^2\norm{\nabla\u}{H^{{r+4}}}^2\nn\\
	\le&C\delta^2\norm{\nabla\u}{H^{{r+4}}}^2.
\end{align*}
Similarly, by Lemma \ref{diu},
\begin{align*}
	\norm{\b\cdot\nabla\b+\b\nabla\b}{H^{{r+3}}}^2
	\le&C\norm{ \b}{H^{3}}^2\norm{\nabla\b}{H^{r+3}}^2\nn\\
	\le& C\norm{{\n}\cdot\nabla \b}{H^{r+3}}^2\norm{\b}{H^{N}}^2\nn\\
	\le&C\delta^2\norm{{\n}\cdot\nabla \b}{H^{r+3}}^2.
\end{align*}
With the aid of Lemma \ref{daishu} again,  we can deduce that
\begin{align*}
	\norm{k(a)\nabla a}{H^{r+3}}^2
	\le& C\left(\norm{\nabla a}{H^{r+3}}^2\norm{k(a)}{L^{\infty}}^2+\norm{k(a)}{H^{r+3}}^2\norm{\nabla a}{L^{\infty}}^2\right)\nn\\
	\le& C\norm{a}{H^{3}}^2\norm{a}{H^{N}}^2\nn\\
	\le& C\norm{d-\n\cdot\b}{H^{3}}^2\norm{a}{H^{N}}^2\nn\\
	\le& C(\norm{d}{H^{3}}^2+\norm{\b}{H^{3}}^2)\norm{a}{H^{N}}^2\nn\\
	\le& C\delta^2\norm{d}{H^{r+4}}^2+C\delta^2\norm{{\n}\cdot\nabla \b}{H^{r+3}}^2
\end{align*}
and
\begin{align*}
	\norm{I(a)(\mu\Delta \u +  (\lambda+\mu)\nabla \div \u)}{H^{r+3}}^2
	\le&C\norm{a}{H^{N}}^2\norm{\Delta \u}{H^{{r+3}}}^2\nn\\
	\le&  C\delta^2\norm{\nabla\u}{H^{{r+4}}}^2.
\end{align*}
By Lemmas \ref{daishu} and \ref{fuhe},  and (\ref{eq:smalla}),
\begin{align}\label{uu6}
	\norm{I(a)({\n}\cdot\nabla \b-{\n}\nabla \b)}{H^{r+3}}^2
	\le&C
	(\|I(a)\|_{L^\infty}^2\|\nabla \b\|_{H^{r+3}}^2+\|\nabla \b\|_{L^\infty}^2\|I(a)\|_{H^{r+3}}^2)\nn\\
	\le&C(\norm{\b}{H^{N}}^2 \norm{a}{H^{3}}^2+\norm{ \b}{H^{3}}^2\norm{a}{H^{r+4}}^2)\nn\\
	\le&C\delta^2\norm{d-\n\cdot\b}{H^{3}}^2+C\norm{{\n}\cdot\nabla \b}{H^{r+3}}^2\norm{a}{H^{N}}^2\nn\\
	\le&C\delta^2(\norm{d}{H^{3}}^2+\norm{\b}{H^{3}}^2)+C\norm{{\n}\cdot\nabla \b}{H^{r+3}}^2\norm{a}{H^{N}}^2\nn\\
	\le&C\delta^2\norm{d}{H^{r+4}}^2+C\delta^2\norm{{\n}\cdot\nabla \b}{H^{r+3}}^2.
\end{align}
The last term in $\norm{f_4}{H^{r+3}}^2$ can be dealt with similarly as \eqref{uu6}. Collecting the estimates above yields
\begin{align}\label{uu7}
	\norm{f_4}{H^{r+3}}^2\le C\delta^2\norm{\nabla\u}{H^{{r+4}}}^2+C\delta^2\norm{{\n}\cdot\nabla \b}{H^{r+3}}^2
	+C\delta^2\norm{d}{H^{{r+4}}}^2.
\end{align}
Inserting \eqref{ming3}, \eqref{ming4}, \eqref{ming7} and \eqref{uu7} in \eqref{han169} and taking $\delta>0$ to be sufficiently small, we obtain
\begin{align}\label{ming11}
	&\frac{d}{dt}\Bigg\{\gamma\Big(\norm{(a,\u,\b,d,\G)}{H^{{r+4}}}^2\Big)-\sum_{0\le s\le {r+3}}\int_{\T^3}\la^{s}\p\u\cdot\la^{s}({\n}\cdot\nabla \b)\,dx\Bigg\}+\norm{{\n}\cdot\nabla \b}{H^{r+3}}^2\nn\\
	&\qquad
	+\gamma\Big(\frac{1}{\nu}\norm{{d}}{H^{r+4}}^2+\mu\norm{\nabla\u}{H^{{r+4}}}^2+(\lambda+\mu)\norm{\div\u}{H^{{r+4}}}^2
	+\nu\norm{\nabla \G}{H^{r+4}}^2\Big)
	\le 0.
\end{align}

\subsection{Completing the proof of Theorem \ref{dingli}} \label{compl}
This subsection finishes the bootstrapping argument and thus completes the  proof of Theorem \ref{dingli}.
Let $\gamma>1$. We set
\begin{align*}
	{\mathcal{E}(t)}=&\gamma\left(\norm{a}{H^{{r+4}}}^2+\norm{(d,\u,\b,\G)}{H^{{r+4}}}^2\right)-\sum_{0\le s\le {r+3}}\int_{\T^3}\la^{s}\p\u\cdot\la^{s}({\n}\cdot\nabla \b)\,dx,
\end{align*}
and
\begin{align*}
	{\mathcal{D}(t)}=&\gamma\Big(\frac{1}{\nu}\norm{{d}}{H^{r+4}}^2+\mu\norm{\nabla\u}{H^{{r+4}}}^2+(\lambda+\mu)\norm{\div\u}{H^{{r+4}}}^2
	+\nu\norm{\nabla \G}{H^{r+4}}^2\Big)\\
	&+\norm{{\n}\cdot\nabla \b}{H^{r+3}}^2.
\end{align*}
(\ref{ming11}) then becomes
\begin{align}\label{ming13}
	\frac{d}{dt}{\mathcal{E}(t)}+\frac12{\mathcal{D}(t)}\le 0.
\end{align}
Clearly, for  $\gamma>1$,
$${\mathcal{E}(t)}\ge\norm{(d,\u,\b,\G)}{H^{{r+4}}}^2.$$
For any ${N}\ge 4r+7$, we invoke the interpolation inequality
\begin{align*}
	\norm{ \b}{H^{r+4}}^2\le&\norm{ \b}{H^{3}}^{\frac32}\norm{ \b}{H^{{N}}}^{\frac12}\le C\delta^{\frac12}\norm{{\n}\cdot\nabla \b}{H^{r+3}}^{\frac32}
\end{align*}
to obtain
\begin{align*}
	{\mathcal{E}(t)}\le& C(\norm{d}{H^{r+4}}^2+\norm{(\u,\G)}{H^{r+4}}^2+\norm{\b}{H^{r+4}}^2)\nn\\
	\le& C\norm{ d}{H^{r+4}}^{\frac32}\norm{ d}{H^{{r+4}}}^{\frac12}+C\norm{(\u,\G)}{H^{3}}^{\frac32}\norm{ (\u,\G)}{H^{{N}}}^{\frac12}+C\norm{ \b}{H^{3}}^{\frac32}\norm{ \b}{H^{{N}}}^{\frac12}\nn\\
	\le& C\norm{ d}{H^{r+4}}^{\frac32}\norm{ d}{H^{{N}}}^{\frac12}+C\norm{(\u,\G)}{H^{3}}^{\frac32}\norm{ (\u,\G)}{H^{{N}}}^{\frac12}+C\norm{ \b}{H^{3}}^{\frac32}\norm{ \b}{H^{{N}}}^{\frac12}\\
	\le& C\delta^{\frac12}\norm{  d}{H^{r+4}}^{\frac32}+
	C\delta^{\frac12}\norm{\nabla (\u,\G)}{H^{r+4}}^{\frac32}+C\delta^{\frac12}\norm{{\n}\cdot\nabla \b}{H^{r+3}}^{\frac32}\nn\\
	\le& C (\mathcal{D}(t))^{\frac34}.
\end{align*}
Inserting this inequality in (\ref{ming13}) gives
\begin{align*}
	\frac{d}{dt}{\mathcal{E}(t)}+c({\mathcal{E}(t)})^{\frac43}\le 0.
\end{align*}
It then follows easily that
\begin{align}\label{ming17}
	{\mathcal{E}(t)}\le C(1+t)^{-3}.
\end{align}
Taking $\ell={N}$ in \eqref{ed3} and using Sobolev's inequalities, we have
\begin{align}\label{ming18}
	&\frac{d}{dt}\norm{(a,\u,\b)}{H^{N}}^2+\mu\norm{\nabla\u}{H^{N}}^2+(\lambda+\mu)\norm{\div\u}{H^{N}}^2
	\le CZ(t)\norm{(a,\u,\b)}{H^{N}}^2
\end{align}
with
\begin{align*}
	Z(t)\stackrel{\mathrm{def}}{=}&\norm{(d,\u,\b)}{H^3}+\norm{(d,\u,\b)}{H^3}^2+\norm{d}{H^3}^2\norm{\b}{H^3}^2+\norm{\b}{H^3}^4.
\end{align*}
Clearly, the decay upper bound in (\ref{ming17}) implies
\begin{align*}
	\int_0^tZ(\tau)\,d\tau\le C.
\end{align*}
Gr\"onwall's inequality applied to \eqref{ming18} implies
\begin{align*}
	\norm{(a,\u,\b)}{H^{N}}^2
	\le&C\norm{(a_0,\u_0,\b_0)}{H^{N}}^2
	\le C\varepsilon^2.
\end{align*}
By taking $\varepsilon$ to be sufficiently small, say $\sqrt{C}\varepsilon\le \delta/2$, we
obtain
$$
	\norm{(a,\u,\b)}{H^{N}}
	\le \frac{\delta}{2}.
$$
The bootstrapping argument then implies that the local solution
can be extended as a global one in time. Finally we prove the decay rate in (\ref{dec}).
By \eqref{ming17},
\begin{align}\label{ming21}
	\norm{a(t)}{H^{r+4}}+\norm{\u(t)}{H^{r+4}}+\norm{\b(t)}{H^{r+4}}\le C(1+t)^{-\frac{3}{2}}.
\end{align}
(\ref{dec}) is a consequence of (\ref{ming21}) and the interpolation inequality,
for any $r+4 \le \beta<N$,
\begin{align*}
	\norm{f(t)}{H^{\beta}}\le\norm{f(t)}{H^{r+4}}^{\frac{{N}-\beta}{{N}-r-4}}
	\norm{f(t)}{H^{N}}^{\frac{\beta-r-4}{{N}-r-4}}.
\end{align*}
This completes the proof of Theorem \ref{dingli}.$\quad\square$

\bigskip
\section*{Acknowledgments}
Part of this work was done when Xiaoping Zhai was visiting School of Mathematical
Science, Peking University in the  Summer of 2021. Zhai would like to thank Professor Zhifei Zhang for his value discussions and suggestions. Wu was partially supported by the National Science Foundation of the United
  States under DMS 2104682 and DMS 2309748.  Zhai was partially supported by the Guangdong Provincial Natural Science Foundation under grant 2022A1515011977 and the
Science and Technology Program of Shenzhen under grant 20200806104726001.

\vskip .1in
\noindent{Conflict of Interest:} The authors declare that they have no conflict of interest.

\vskip .1in
\noindent{Data availability statement:}
 Data sharing not applicable to this article as no datasets were generated or analysed during the
current study.

\vskip .3in


\begin{thebibliography}{10}
	
	\bibitem{abidi} H. Abidi, P.  Zhang,  On the global solution of a 3-D MHD system with initial data near equilibrium,
	{\it Comm. Pure Appl. Math.}, {\bf70} (2017), 1509--1561.
	
	\bibitem{chemin}
	J. Chemin, D.S. McCormick, J.C. Robinson,  J.L. Rodrigo,
	\newblock Local existence for the non-resistive MHD equations in Besov spaces,
	\newblock {\it  Adv. Math.}, {\bf 286} (2016), 1--31.
	\newblock $\,$
	
	\bibitem{ChenWang}G.-Q. Chen, D. Wang, 	Existence and continuous dependence of large solutions for the magnetohydrodynamic equations, {\it Z. Angew. Math. Phys.,  \bf 54} (2003), 608--632.
	
	\bibitem{zhangzhifei}
	W. Chen, Z. Zhang, J. Zhou,
	\newblock Global well-posedness for the 3-D MHD equations
	with partial diffusion in periodic domain,
	\newblock {\it Sci China Math},   {\bf65} (2022),  309--318.
	\newblock $\,$
	
	\bibitem{DV05} L. Desvillettes, C. Villani,  On the trend to global equilibrium for spatially inhomogeneous kinetic systems: the Boltzmann equation, {\it Invent. Math.}, {\bf159} (2005), 245--316.
	
	\bibitem{zhaixiaoping2021arxiv}
	B. Dong, J. Wu, X. Zhai, Global small solutions to a special $2\frac12$-D compressible viscous non-resistive MHD system,
	{\it J. Nonlinear Sci.}, {\bf 33}, (2023), no. 1, Paper No. 21, 37 pp.
	
	
	\bibitem{fefferman1}
	C.L. Fefferman, D.S. McCormick, J.C. Robinson, J.L. Rodrigo,
	\newblock Higher order commutator estimates and local existence for the non-resistive MHD equations and related models,
	\newblock {\it J. Funct. Anal.}, {\bf 267} (2014), 1035--1056.
	\newblock $\,$
	
	\bibitem{fefferman2}
	C.L. Fefferman, D.S. McCormick, J.C. Robinson, J.L. Rodrigo,
	\newblock Local existence
	for the non-resistive MHD equations in nearly optimal Sobolev spaces,
	\newblock {\it Arch. Ration. Mech. Anal.}, {\bf 233} (2017), 677--691.
	\newblock $\,$

	\bibitem{F04}E. Feireisl, Dynamics of Viscous Compressible Fluids, Oxford University Press, Oxford, 2004.

	\bibitem{hoff1}
	D. Hoff, Global solutions of the Navier-Stokes equations for multidimensional compressible flow with discontinuous initial data, { \it J. Differential Equations},  {\bf120} (1995), 215--254.
	
\bibitem{hongguangyi}
G. Hong, X. Hou, H. Peng, C. Zhu,  Global existence for a class of large solutions to three-dimensional compressible magnetohydrodynamic equations with vacuum,  { \it SIAM J. Math. Anal.},   {\bf49}  (2017), 2409--2441.





	
	\bibitem{HuWang} X. Hu, D. Wang, Global existence and large-time behavior of solutions to the three-dimensional equations of compressible Magnetohydrodynamic flows, {\it Arch. Ration. Mech.  Anal., \bf 197} (2010), 203--238.
	
	
	\bibitem{jiangfei2019}
	F. Jiang, S. Jiang, Nonlinear stability and instability in the Rayleigh-Taylor problem of stratified compressible MHD fluids,
	{\it Calc. Var. Partial Differ. Equ.}, {\bf58} (2019), 29.
	
	\bibitem{JZW} S. Jiang, J.  Zhang,  On the non-resistive limit and the magnetic boundary-layer for one-dimensional compressible magnetohydrodynamics, {\it Nonlinearity,} {\bf 30} (2017), 3587--3612.
	
	
	\bibitem{kato}
	T. Kato, Liapunov Functions and Monotonicity in the Euler and Navier-Stokes Equations, Lecture Notes
	in Mathematics, vol. 1450. Springer, Berlin (1990).
	
	\bibitem{Kawashima}
	S. Kawashima, System of a Hyperbolic-Parabolic Composite Type, with Applications to the Equations of
	Magnetohydrodynamics, Ph.D. thesis, Kyoto University, 1984.
	
	
	
	\bibitem {Xin4}
	H. Li, Y. Wang, Z. Xin, Non-existence of classical solutions with finite energy to the Cauchy problem of the compressible Navier-Stokes equations, {\it Arch. Ration. Mech. Anal.}, {\bf232} (2019), 557--590.
	
\bibitem {lihailiang}
H.  Li, X. Xu,  J. Zhang, Global classical solutions to 3D compressible magnetohydrodynamic equations with large oscillations and vacuum, {\it SIAM J. Math. Anal.}, {\bf45} (2013), 1356--1387.

	\bibitem{lijinlu}
	J. Li, W. Tan, Z. Yin,
	\newblock Local existence and uniqueness for the non-resistive MHD equations in homogeneous Besov spaces,
	\newblock {\it Adv. Math.}, {\bf 317} (2017), 786--798.
	\newblock $\,$
	
	
	\bibitem{LS1D} Y. Li, Y. Sun,  Global weak solutions and long time behavior for 1D compressible MHD equations without resistivity, {\it J. Math. Phys.}, {\bf 60} (2019), 071511, 22 pp.
	
	\bibitem{LS2D}Y. Li, Y. Sun,  Global weak solutions to a two-dimensional compressible MHD equations of viscous non-resistive fluids,
	{\it J. Differential Equations},  {\bf267}  (2019), 3827--3851.
	
	\bibitem{LXZ} F. Lin, L.  Xu, P. Zhang,  Global small solutions of 2-D incompressible MHD system, {\it J. Differential Equations}, {\bf 259} (2015), 5440--5485.
	
	
	
	\bibitem{LiZh1}Y. Liu,  T. Zhang, {Global weak solutions to a 2D compressible non-resistivity MHD system with non-monotone pressure law and nonconstant viscosity}, {\it J. Math. Anal. Appl.}, {\bf 502} (2021), Paper No. 125244, 38 pp.
	
	
	
	\bibitem{PZZu1} R. Pan, Y. Zhou, Y. Zhu, Global classical solutions of three dimensional viscous MHD system without magnetic diffusion on periodic boxes, {\it Arch. Ration. Mech. Anal.}, {\bf 227} (2018), 637--662.
	
	
	
	
	
	\bibitem{RWZ} X. Ren, J. Wu, Z. Xiang, Z. Zhang, Global existence and decay of smooth solution for the 2-D MHD equations without magnetic diffusion, {\it J. Funct. Anal.},  {\bf 267} (2014), 503--541.
	
	
	
	
	
	\bibitem{Tand} E. Tandberg-Hanssen, G. Emslie,  The physics of solar flares, Cambridge University Press,  Cambridge, United Kingdom, 1988.
	
	
	\bibitem{Triebel}
	H. Triebel, Theory of Function Spaces, Monogr. Math., Birkh$\ddot{\mathrm{a}}$user Verlag, Basel, Boston, 1983.
	
	
	\bibitem{Wa1} Y. Wang,  Sharp nonlinear stability criterion of viscous non-resistive MHD internal waves in 3D, {\it Arch. Ration. Mech. Anal.}, {\bf 231} (2019), 1675--1743.
	
\bibitem{wuguochun}
G. Wu, Y. Zhang,  W. Zou, Optimal time-decay rates for the 3D compressible magnetohydrodynamic flows with discontinuous initial data and large oscillations, {\it J. Lond. Math. Soc.}, {\bf103},  (2021),  817--845.



	\bibitem{wujihong1}
	J. Wu, The 2D magnetohydrodynamic equations with partial or fractional dissipation, in: Lectures
	on the Analysis of Nonlinear Partial Differential Equations, Morningside Lectures on Mathematics,
	Part 5, MLM5, International Press, Somerville, MA, 2018, pp. 283--332.
	
	
	\bibitem{WuWu}
	\newblock J. Wu, Y. Wu,
	\newblock { Global small solutions to the compressible 2D magnetohydrodynamic system without magnetic diffusion},
	\newblock {\it Adv.  Math.}, {\bf310} (2017), 759--888.
	
	\bibitem{WuZhu} J. Wu, Y. Zhu, Global well-posedness for 2D non-resistive compressible MHD system in periodic domain,  {\it J. Funct. Anal.},  {\bf283} (2022), Paper No. 109602.
	
	\bibitem{Xiao} Y. Xiao, Z. Xin, J. Wu, Vanishing viscosity limit for
	the 3D magnetohydrodynamic system with a slip boundary condition, {\it J. Funct. Anal., \bf 257} (2009),  3375-3394.
	
	
	\bibitem{Xin} Z. Xin, { Blowup of smooth solutions to the compressible Navier-Stokes equation with compact density},
	{\it Comm. Pure Appl. Math.},  {\bf 51} (1998), 229--240.
	
	\bibitem{xin7}
	Z. Xin, W. Yan,
	{On blowup of classical solutions to the compressible Navier-Stokes equations},
	\newblock {\it  Comm. Math. Phys.},  {\bf321} (2013),  529--541.
	\newblock $\,$
	
	\bibitem{XZ} L. Xu, P. Zhang, Global small solutions to three-dimensional incompressible magnetohydrodynamical system,
	{\it SIAM J. Math. Anal.},  {\bf 47} (2015), 26--65.
	
	
	
	\bibitem{ZT} T. Zhang, Global solutions to the 2D viscous, non-resistive MHD system with large background magnetic field, {\it J. Differential Equations},  {\bf 260} (2016),  5450--5480.
	
	\bibitem{zhongxin}
	X. Zhong, On local strong solutions to the 2D Cauchy problem of the compressible non-resistive magnetohydrodynamic equations with vacuum, { \it J. Dynam. Differential Equations},  {\bf32}  (2020),  505--526.
	
\end{thebibliography}
\end{document}